\newtheorem{conjecture}{Conjecture}
\newtheorem{corollary}{Corollary}
\newtheorem{definition}{Definition}
\newtheorem{example}{Example}
\newtheorem{lemma}{Lemma}
\newtheorem{proposition}{Proposition}
\newtheorem{theorem}{Theorem}
\begin{document}

\title{Coherence of countably many bets}

\author{Rafael B. Stern}
\address{Department of Statistics \\
	 Carnegie Mellon University \\ 
	 Pittsburgh, PA 15217, USA}
\email{rbstern@gmail.com}
\thanks{We thank Georg Goerg, Jessi Cisewski, Julio Stern, Mark Schervish, Rafael Izbicki and Teddy Seidenfeld for their valuable comments.}
	 
\author{Joseph B. Kadane}
\address{Department of Statistics \\
	 Carnegie Mellon University \\ 
	 Pittsburgh, PA 15217, USA}
\email{kadane@stat.cmu.edu}

\keywords{coherence, betting systems, finite additivity, countable additivity}
\subjclass[2000]{Primary 60A05}

\begin{abstract}
De Finetti's betting argument is used to justify finitely additive probabilities when only finitely many bets are considered. Under what circumstances can countably many bets be used to justify countable additivity? In this framework, one faces issues such as the convergence of the returns of the bet. Generalizations of de Finetti's \cite{deFinetti2} argument depend on what type of conditions on convergence are required of the bets under consideration. Two new such conditions are compared with others presented in the literature.
\end{abstract}

\maketitle

\section{Betting Systems}
Betting systems have been an object of interest to probabilists and statisticians dating back to the genesis of the disciplines. In particular, \cite{deFinetti2} considers relations between the axioms of probability and an experiment involving bets. Let $\Omega$ be the sample space. Consider that, for each event $A \subset \Omega$, a bet on $A$ is such that \textit{you} get $\$1$ from the broker if $A$ is observed and $0$, otherwise. Also, for each $A$ in an arbitrary collection of events $\mathcal{C} \subset \mathcal{P}(\Omega)$, $P(A)$ is the price for which you would be willing to either buy or sell such a bet. The broker can then require you to buy or sell any finite quantity of a finite number of bets on these events using your prices. You incur a sure loss if you always lose money, no matter what events are observed. If it is impossible for you to incur a sure loss, the assignment of prices is coherent. De Finetti \cite{deFinetti2} shows that, if $\mathcal{C}$ is a field over $\Omega$, $P$ is coherent 
iff for every $A \in \mathcal{C}$, $0 \leq P(A) \leq 1$, $P(\Omega)=1$ and $P$ is finitely additive, that is, for every $A \in \mathcal{C}$ and $B \in \mathcal{C}$ such that $A \cup B \in \mathcal{C}$, $P(A \cup B) = P(A)+P(B)$ . From now on, we call such prices finitely additive probabilities.

When $\mathcal{C}$ is a field we say that $P$ satisfies countable additivity (\cite{Billingsley}[p.~20]) if, for every sequence $(A_{i})_{i \geq 1}$ of disjoint sets of $\mathcal{C}$ such that $\cup_{i \geq 1}{A_{i}} \in \mathcal{C}$, $P(\cup_{i \geq 1}{A_{i}}) = \sum_{i=1}^{\infty}{P(A_{i})}$. If an assignment of prices is a finitely additive probability and also satisfies countable additivity, we call it a countably additive probability.

We address two questions related to de Finetti's \cite{deFinetti2} results. First, why can the broker perform only a finite number of bets? For example, consider bets on successive flips of a coin. A strategy such as ``bet on heads until heads is observed'' yields a countable number of bets because there is no finite upper bound on the number of heads required.  Is a countable number of bets related to countably additive probabilities in the same way a finite number of bets is related to finitely additive probabilities? Second, is the assumption that $\mathcal{C}$ is a field necessary? \cite{Heath} answers this question when only finitely many bets are considered. If a given assignment of prices on an arbitrary collection $\mathcal{C} \subset \mathcal{P}(\Omega)$ is coherent in de Finetti's \cite{deFinetti2} setting then these prices can be extended to a finitely additive probability on the smallest field which contains $\mathcal{C}$, $\mathcal{F}(\mathcal{C})$. That is, there exists $P^{e}: \mathcal{F}(\mathcal{C}) \rightarrow [0,1]$ such that, if $C \in \mathcal{C}$, $P^{e}(C) = P(C)$ and $P^{e}$ is a finitely additive probability. Thus, using \cite{deFinetti2}, an assignment of prices is coherent on $\mathcal{C}$ if and only if $P$ can be extended to a coherent assignment on $\mathcal{F}(\mathcal{C})$. Do similar results hold when one considers a countable number of bets?

Before these questions can be addressed, it is necessary to define what a ``countable number of bets'' means. For example, consider an event $A \in \mathcal{C}$. Take a sequence of bets in which a unit of event $A$ is bought for every odd number and sold for every even number. It's not clear how to define the price of this bet or what would result if $A$ were observed.

In order to define which bets are considered, we first introduce notation. Let $\mathbb{Z}^{+} = \{1,2,3,\ldots\}$ and $\Omega$ be the sample space. You assign prices to events which are subsets of $\Omega$. The set of all events for which prices are assigned is denoted by $\mathcal{C}$. A price assignment is a function $P: \mathcal{C} \rightarrow \mathbb{R}$. For every $A \in \mathcal{C}$, $P(A)$ is the price you assign to $A$. $\mathcal{F}(\mathcal{C})$ and $\mathcal{\sigma}(\mathcal{C})$ are, respectivelly, the field and the $\sigma$-field generated by $\mathcal{C}$. 

If $\mathcal{C}$ is a field and $P$ is a countably additive probability, Carath\'{e}odory's extension theorem (\cite{Billingsley}[p.~32]) guarantees that there exists a unique countably additive extension of $P$ to $\sigma(\mathcal{C})$. Denote this extension by $P^{*}$. Let $\mathcal{U} = \{(a,b): a,b \in \mathbb{R}\}$. A random variable $X$ is $\sigma(\mathcal{C})$-measurable if, for every $U \in \sigma(\mathcal{U})$, $X^{-1}[U] \in \sigma(\mathcal{C})$. For $\sigma(\mathcal{C})$-measurable $X$ and $Y$, $X$ is a version of $Y$ if $X=Y$ a.s. $P^{*}$.

A betting portfolio is a sequence of bets $(\alpha_{i},A_{i})_{i \in \mathbb{Z}^{+}}$, in which the broker buys $\alpha_{i} \in \mathbb{R}$ units of event $A_{i} \in \mathcal{C}$ from you (if $\alpha_{i}$ is negative, the broker sells $\alpha_{i}$ units of $A_{i}$). The price of the betting portfolio is $\sum_{i=1}^{\infty}{\alpha_{i}P(A_{i})}$. For every $A \subset \Omega$, let $I_{A}:\Omega \rightarrow \{0,1\}$ be the indicator function of $A$, that is, $I_{A}(w) = 1$, if $w \in A$, and $I_{A}(w) = 0$, otherwise. The balance of the betting portfolio is $\sum_{i=1}^{\infty}{\alpha_{i}(I_{A_{i}}-P(A_{i}))}$. A betting system is a collection of betting portfolios.

\begin{definition}
  \label{coherence}
  $P$ is incoherent in a given betting system if there exists a betting portfolio $(\alpha_{i},A_{i})_{i \in \mathbb{Z}^{+}}$ in that system which leads to uniform sure loss. That is, there exists $\epsilon > 0$ such that, $\forall w \in \Omega$, $\sum_{i=1}^{\infty}{\alpha_{i}(I_{A_{i}}(w)-P(A_{i}))} \leq -\epsilon$. $P$ is coherent if it is not incoherent.
\end{definition}

Definition \ref{coherence} corresponds to the one presented in \cite{deFinetti2}. \cite{Adams} observes that the coherence of $P$ depends on what types of convergence are specified by a betting system. Adams considers a betting system which includes exactly those betting portfolios $(\alpha_{i},A_{i})_{i \in \mathbb{Z}^{+}}$ such that $\sum_{i=1}^{\infty}{|\alpha_{i}(I_{A_{i}}-P(A_{i}))|}$ converges for all $w \in \Omega$. In this betting system, $P$ being a countably additive probability is a necessary condition for $P$ to be coherent. Is it sufficient?
 
\cite{Beam} establishes a partial answer. Assume that $\mathcal{C}$ is a field. Consider a betting system which includes exactly those betting portfolios $(\alpha_{i},A_{i})_{i \in \mathbb{Z}^{+}}$ such that the random quantity $\sum_{i=1}^{\infty}{|\alpha_{i}(I_{A_{i}}-P(A_{i}))|}$ is bounded on $\Omega$. Beam \cite{Beam} proves that $P$ is coherent in such a betting system iff $P$ is a countably additive probability on $\mathcal{C}$. 

Nevertheless, Beam \cite{Beam} argues that the condition imposed on the betting system might be artificial. So he weakens this condition and includes every betting portfolio such that $\sum_{i=1}^{\infty}{\alpha_{i}(I_{A_{i}}-P(A_{i}))}$ converges pointwise and is bounded on $\Omega$. In this betting system, Beam \cite{Beam} proves that if $\Omega=(0,1)$ and $\mathcal{C}$ is the Borel field on $(0,1)$, then the assignment of prices which corresponds to the uniform distribution is incoherent. Beam \cite{Beam} constructs a betting portfolio such that the price is conditionally convergent and, through a permutation of the indexes of the portfolio, obtains uniform sure loss.

We consider two additional betting systems besides the ones in \cite{deFinetti2}, \cite{Adams} and \cite{Beam}. 

\begin{definition}
  \label{bettingSystems}
  Let $(\Omega,\mathcal{C},P)$ be given; the following betting systems include exactly those betting portfolios $(\alpha_{i},A_{i})_{i \in \mathbb{Z}^{+}}$ such that:

  \textbf{System 1}: Only finitely many $\alpha_{i} \neq 0$ (\cite{deFinetti2}).
	
  \textbf{System 2}: $\sum_{i=1}^{\infty}{|\alpha_{i}|P(A_{i})} < \infty$ and $\sum_{i=1}^{\infty}{\alpha_{i}(I_{A_{i}}-P(A_{i}))}$ converges pointwise.
	
  \textbf{System 2B}: $\sum_{i=1}^{\infty}{|\alpha_{i}(I_{A_{i}}-P(A_{i}))|}$ is bounded on $\Omega$ (\cite{Beam}). 
	
  \textbf{System 2A}: For all $w \in \Omega$, $\sum_{i=1}^{\infty}{|\alpha_{i}(I_{A_{i}}-P(A_{i}))|} < \infty$ (\cite{Adams}).
	
  \textbf{System 3}: $\sum_{i=1}^{\infty}{\alpha_{i}P(A_{i})}$ converges and $\sum_{i=1}^{\infty}{\alpha_{i}(I_{A_{i}}-P(A_{i}))}$ converges pointwise.
\end{definition}

Betting systems $2$, $2B$, $2A$ and $3$ are extensions of system $1$ and include the possibility of a countable number of bets. Betting system $2$ is the largest betting system such that the price of every betting portfolio does not depend on the order in which the bets are settled and the balance is defined for every $w \in \Omega$. System $2A$ extends system $2B$. System $2A$ does not extend system $2$ nor does system $2$ extend system $2B$, as the following examples show. Let $P$ be the uniform distribution on $[0,1]$. The portfolio $(\alpha_{i},A_{i})_{i\in \mathbb{Z}^{+}}$ such that $\alpha_{i} =\frac{(-1)^{i}}{i}$ and $A_{i}=[0,\frac{1}{i^{2}})$ is included in system $2$ but not in $2A$. Also, the portfolio $(\alpha_{i},A_{i})_{i\in \mathbb{Z}^{+}}$ such that $\alpha_{i}=1$ and $A_{i}=[0,1]$ for every $i \in \mathbb{Z}^{+}$ is included in system $2B$ but not in $2$. Betting system $3$ extends $2$ and is the largest betting system such that, for every included betting portfolio, price and balance are 
defined. 

When $\mathcal{C}$ is a field, for each of these systems we present conditions on $P$ that are equivalent to coherence. In Section \ref{altCoherence} we show that the same conditions on $P$ are also equivalent to alternatives to the definition of coherence (\cite{Adams}). In particular, we solve a question in \cite{Adams} about what condition on $P$ is equivalent to system-$2A$-rationality (defined in Section \ref{altCoherence}). We also provide counter-examples to extension results such as in \cite{Heath} in all betting systems besides system $1$. The existence of such an extension in system $2B$ was originally questioned in \cite{Beam}. Finally we characterize what balances can be generated by a coherent $P$ in each of the betting systems.

The following definition and lemma will be useful in many of the proofs:

\begin{definition}
  \label{defConstantBet}
  A betting portfolio $(\alpha_{i},A_{i})_{i \in \mathbb{Z}^{+}}$ is constant if $\sum_{i=1}^{\infty}{\alpha_{i}(I_{A_{i}}-P(A_{i}))}$ is constant on $\Omega$.
\end{definition}

\begin{lemma}
  \label{constantBet}
  In betting systems $1$, $2$, $2A$, $2B$ and $3$, if a constant betting portfolio with a balance different from $0$ is included in the system, then $P$ is incoherent in that system.
\end{lemma}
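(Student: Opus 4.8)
The plan is to exploit the symmetry of each betting system under sign reversal of the stakes. Suppose $(\alpha_{i},A_{i})_{i \in \mathbb{Z}^{+}}$ is a constant portfolio included in one of the systems, so that its balance $\sum_{i=1}^{\infty}{\alpha_{i}(I_{A_{i}}(w)-P(A_{i}))}$ equals some constant $c$ for every $w \in \Omega$, with $c \neq 0$ by hypothesis. If $c < 0$, this portfolio already witnesses uniform sure loss: taking $\epsilon = -c > 0$, the balance is $\leq -\epsilon$ for every $w$, so $P$ is incoherent by Definition \ref{coherence}. The remaining task is to reduce the case $c>0$ to this one.

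To that end, I would pass to the reversed portfolio $(-\alpha_{i},A_{i})_{i \in \mathbb{Z}^{+}}$, whose balance is $-c < 0$ everywhere. The key step is to verify that this reversed portfolio still belongs to whichever betting system contained the original. This is immediate from the form of the defining conditions in Definition \ref{bettingSystems}: replacing $\alpha_{i}$ by $-\alpha_{i}$ leaves unchanged the index set $\{i : \alpha_{i} \neq 0\}$ (System $1$), the quantities $|\alpha_{i}|P(A_{i})$ and $|\alpha_{i}(I_{A_{i}}-P(A_{i}))|$ (Systems $2$, $2A$, $2B$), and merely negates the partial sums $\sum_{i=1}^{\infty}{\alpha_{i}P(A_{i})}$ and $\sum_{i=1}^{\infty}{\alpha_{i}(I_{A_{i}}-P(A_{i}))}$ without affecting their convergence (Systems $2$ and $3$). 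Hence every defining condition is preserved under sign reversal, and $(-\alpha_{i},A_{i})_{i \in \mathbb{Z}^{+}}$ lies in the same system.

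Having established closure under negation, the reversed portfolio has constant balance $-c < 0$, so taking $\epsilon = c > 0$ shows that it leads to uniform sure loss. Thus in either sign case the system contains a portfolio exhibiting uniform sure loss, and $P$ is incoherent in that system, which is the claim.

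I do not anticipate any serious obstacle: the only point requiring care is the verification that each of the five systems is closed under the map $\alpha_{i} \mapsto -\alpha_{i}$. This is the content of the second paragraph and follows directly from the observation that each defining condition is phrased either in terms of absolute values of the stakes or in terms of convergence of series that are invariant in character under a global sign change.
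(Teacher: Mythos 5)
Your proposal is correct and follows essentially the same route as the paper: handle the negative-balance case directly via Definition \ref{coherence}, and reduce the positive case by negating the stakes, using closure of each system under $\alpha_i \mapsto -\alpha_i$. The paper's proof simply asserts this closure, whereas you verify it system by system; that extra detail is harmless and arguably an improvement.
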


\begin{proof}
  Let $(\alpha_{i},A_{i})_{i \in \mathbb{Z}^{+}}$ be a constant betting portfolio. If its balance is negative, then $P$ is incoherent. If its balance is positive, then $(-\alpha_{i},A_{i})_{i \in \mathbb{Z}^{+}}$ is in the betting system, is constant and has a negative balance. Hence, $P$ is incoherent. 
\end{proof}

\section{Betting system 2}
\label{bet2}

Next, Theorem \ref{bet2equiv} establishes a necessary and sufficient condition for $P$ to be system-2-coherent when $\mathcal{C}$ is a field.

\begin{theorem}
  \label{bet2equiv}
  If $\mathcal{C}$ is a field, $P$ is system-2-coherent if and only if $P$ is a countably additive probability.
\end{theorem}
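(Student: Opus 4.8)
The plan is to prove both implications, handling the ``only if'' direction (coherence implies countable additivity) by contraposition and the ``if'' direction (countable additivity implies coherence) by an expectation argument against the Carath\'{e}odory extension $P^{*}$.

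For the ``only if'' direction, suppose $P$ is not a countably additive probability. If $P$ is not even a finitely additive probability, then by de Finetti's \cite{deFinetti2} theorem $P$ is system-$1$-incoherent, and since every system-$1$ portfolio (finitely many nonzero $\alpha_{i}$) also lies in system $2$, $P$ is system-$2$-incoherent. Otherwise $P$ is a finitely additive probability failing countable additivity, so there are disjoint $A_{i} \in \mathcal{C}$ with $A := \bigcup_{i} A_{i} \in \mathcal{C}$ and $\sum_{i} P(A_{i}) \neq P(A)$. Monotonicity and finite additivity give $\sum_{i=1}^{n} P(A_{i}) = P\left(\bigcup_{i=1}^{n} A_{i}\right) \leq P(A)$, so the series converges with $\sum_{i} P(A_{i}) \leq P(A)$; the failure of equality forces $\delta := P(A) - \sum_{i} P(A_{i}) > 0$. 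I would then exhibit the portfolio (after relabeling the index set) that buys one unit of each $A_{i}$ and sells one unit of $A$. Its balance is $\sum_{i} (I_{A_{i}} - P(A_{i})) - (I_{A} - P(A))$, and since $\sum_{i} I_{A_{i}} = I_{A}$ pointwise (the $A_{i}$ are disjoint with union $A$), this collapses to the constant $\delta$. Because $\sum_{i} |\alpha_{i}| P(A_{i}) = \sum_{i} P(A_{i}) + P(A) < \infty$ and the balance converges pointwise, the portfolio lies in system $2$; being constant with nonzero balance, Lemma \ref{constantBet} yields system-$2$-incoherence.

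For the ``if'' direction, assume $P$ is a countably additive probability and let $P^{*}$ be its Carath\'{e}odory extension to $\sigma(\mathcal{C})$. Given any system-$2$ portfolio $(\alpha_{i}, A_{i})$, write $S(w) = \sum_{i} \alpha_{i} (I_{A_{i}}(w) - P(A_{i}))$ for its balance; each partial sum is $\sigma(\mathcal{C})$-measurable, so the pointwise limit $S$ is too. I would show $\int_{\Omega} S \, dP^{*} = 0$. Since uniform sure loss would force $S \leq -\epsilon$ everywhere and hence $\int_{\Omega} S \, dP^{*} \leq -\epsilon < 0$, this identity immediately rules out incoherence.

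The crux---and the step I expect to be the main obstacle---is justifying the interchange $\int \sum_{i} \alpha_{i} (I_{A_{i}} - P(A_{i})) \, dP^{*} = \sum_{i} \alpha_{i} \int (I_{A_{i}} - P(A_{i})) \, dP^{*}$, because system $2$ only guarantees pointwise (possibly conditional) convergence of the balance, not the uniform domination available in systems $2A$ and $2B$. The key observation is that the defining condition $\sum_{i} |\alpha_{i}| P(A_{i}) < \infty$ is exactly what supplies absolute integrability: computing $\int |I_{A_{i}} - P(A_{i})| \, dP^{*} = 2 P(A_{i})(1 - P(A_{i})) \leq 2 P(A_{i})$ gives $\sum_{i} |\alpha_{i}| \int |I_{A_{i}} - P(A_{i})| \, dP^{*} \leq 2 \sum_{i} |\alpha_{i}| P(A_{i}) < \infty$, so the Fubini--Tonelli theorem (for counting measure on $\mathbb{Z}^{+}$ against $P^{*}$) licenses the interchange and shows the absolutely convergent version of the series agrees with $S$ almost surely. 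Each term then evaluates as $\int (I_{A_{i}} - P(A_{i})) \, dP^{*} = P^{*}(A_{i}) - P(A_{i}) = 0$ since $P^{*}$ extends $P$, whence $\int_{\Omega} S \, dP^{*} = 0$ as required.
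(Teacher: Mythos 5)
Your proof is correct and follows the same overall architecture as the paper's: the ``only if'' direction is the paper's Lemma~\ref{bet2countadd} almost verbatim (reduce to finite additivity via de~Finetti and system-$1$ coherence, then build the constant portfolio buying the $A_{i}$ and selling their union, check it is in system $2$ using $\sum_{i} P(A_{i}) \leq P(A) \leq 1$, and invoke Lemma~\ref{constantBet}), and the ``if'' direction, as in Lemma~\ref{bet2coh}, reduces to showing $E_{P^{*}}$ of every system-$2$ balance is zero. The one genuine difference is how you justify that expectation: the paper splits the portfolio into its nonnegative- and negative-coefficient subsequences and applies the Monotone Convergence Theorem to each piece, whereas you apply Tonelli to the double sum using $\int |I_{A_{i}} - P(A_{i})|\,dP^{*} = 2P(A_{i})(1-P(A_{i})) \leq 2P(A_{i})$ and the system-$2$ condition $\sum_{i}|\alpha_{i}|P(A_{i}) < \infty$, obtaining $P^{*}$-a.s.\ absolute convergence and then interchanging by Fubini. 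Both hinge on the same integrability input; your route is in fact the device the paper deploys later for system $2A$ (Lemma~\ref{bet2Acoh}), so it has the modest advantage of handling systems $2$ and $2A$ by one uniform argument, while the paper's MCT split avoids any appeal to product measures. Your remark that the absolutely convergent rearrangement agrees with the ordered balance $S$ on the a.s.\ set of absolute convergence is exactly the point that needs saying, and you say it.
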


\begin{proof}
  Lemma \ref{bet2countadd} proves that every system-2-coherent $P$ is countably additive. Lemma \ref{bet2coh} proves the reverse implication.
\end{proof} 

\begin{lemma}
  \label{bet2countadd}
  Let $\mathcal{C}$ be a field. If $P$ is system-$2$-coherent, then $P$ is a countably additive probability.
\end{lemma}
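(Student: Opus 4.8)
The plan is to split the claim into its two components: that system-2-coherence forces $P$ to be a finitely additive probability, and that it then forces countable additivity.

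For the first component, I would observe that System 1 (the finitely supported portfolios of \cite{deFinetti2}) is contained in System 2, since a portfolio with only finitely many nonzero $\alpha_{i}$ trivially satisfies both $\sum_{i=1}^{\infty}|\alpha_{i}|P(A_{i}) < \infty$ and the pointwise convergence requirement. Hence any System-1 portfolio yielding uniform sure loss would already witness System-2-incoherence, so system-2-coherence implies system-1-coherence. For a finitely supported portfolio the balance $\sum_{i}\alpha_{i}(I_{A_{i}}(w)-P(A_{i}))$ takes only finitely many values as $w$ ranges over $\Omega$, so ``always negative'' and ``bounded away from $0$'' coincide; thus de Finetti's characterization applies verbatim and yields that $P$ is a finitely additive probability, in particular $0 \leq P(A) \leq 1$ for all $A$, $P(\Omega)=1$, and $P$ is monotone.

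For countable additivity, fix disjoint $(A_{i})_{i \geq 1}$ in $\mathcal{C}$ with $A := \bigcup_{i}A_{i} \in \mathcal{C}$. Because $\mathcal{C}$ is a field, each finite union lies in $\mathcal{C}$, and finite additivity together with monotonicity gives $\sum_{i=1}^{n}P(A_{i}) = P(\bigcup_{i=1}^{n}A_{i}) \leq P(A) \leq 1$; the partial sums are nondecreasing and bounded, so $\sum_{i}P(A_{i})$ converges. I would then consider the portfolio that sells one unit of $A$ and buys one unit of each $A_{i}$, i.e.\ the sequence with first term $(-1,A)$ followed by the terms $(1,A_{i})$. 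Its absolute price $P(A)+\sum_{i}P(A_{i})$ is finite by the preceding convergence, so the first System-2 requirement holds. For the balance, disjointness means each $w$ lies in at most one $A_{i}$, so $\sum_{i=1}^{n}I_{A_{i}}(w)$ is eventually equal to $I_{A}(w)$; hence the balance series converges pointwise, and a direct computation shows it equals the constant $P(A)-\sum_{i}P(A_{i})$ for every $w \in \Omega$ (whether or not $w \in A$).

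This exhibits a constant betting portfolio in System 2 whose balance is $P(A)-\sum_{i}P(A_{i})$. If this quantity were nonzero, Lemma \ref{constantBet} would make $P$ system-2-incoherent, contradicting the hypothesis; therefore $P(A)=\sum_{i}P(A_{i})$, which is countable additivity. The one step requiring care is the pointwise convergence and constancy of the balance: it rests on the fact that for fixed $w$ the indicator contributions telescope to $I_{A}(w)$ because the $A_{i}$ are disjoint, while the price terms converge by the boundedness argument. The main conceptual move, and the part I expect to carry the argument, is reformulating the potential failure of countable additivity as a constant nonzero balance so that Lemma \ref{constantBet} can be invoked.
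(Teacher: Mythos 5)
Your proposal is correct and follows essentially the same route as the paper: both reduce the failure of countable additivity to a constant nonzero balance and invoke Lemma \ref{constantBet}, after first using system-1-coherence (hence de Finetti's characterization) to secure the absolute convergence of $\sum_i P(A_i)$ needed for membership in System 2. The only cosmetic differences are the sign convention on the portfolio and that you apply de Finetti directly on the field rather than passing through the extension result of \cite{Heath}.
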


\begin{proof}
  Let $P$ be system-2-coherent and $(A_{i})_{i \geq 2}$, be a sequence in $\mathcal{C}$ such that, for all $i \neq j$, $A_{i} \cap A_{j} = \emptyset$ and $A_{1} = \cup_{i=2}^{\infty}{A_{i}} \in \mathcal{C}$. 
	
  Consider a betting portfolio $(\beta_{i},B_{i})_{i \in Z^{+}}$ such that $(\beta_{1},B_{1}) = (1,A_{1})$ and, for $i \geq 2$, $(\beta_{i}, B_{i}) = (-1,A_{i})$. The price of this portfolio is $P(A_{1}) - \sum_{i=2}^{\infty}{P(A_{i})}$. 
	
  The following argument proves that $(\beta_{i},B_{i})_{i \in Z^{+}}$ is in betting system $2$. Since $P$ is system-2-coherent, it is also system-1-coherent. Thus, using \cite{Heath}, there exists a finitely additive extension of $P$ to $\mathcal{F}(\mathcal{C})$. Hence, for all $i \in \mathbb{Z}^{+}$, $P(A_{i}) \geq 0$ and for all $n \in \mathbb{Z}^{+}$, $\sum_{i=2}^{n}{P(A_{i})} \leq 1$. Conclude that $P(A_{1}) - \sum_{i=2}^{\infty}{P(A_{i})}$ converges absolutely. Also, since $A_{i}$ are disjoint, $\{w \in \Omega: I_{B_{i}}(w) = 1\text{ infinitely often}\} = \emptyset$ and $\sum_{i=1}^{\infty}{\beta_{i}(I_{B_{i}}-P(B_{i}))}$ converges pointwise on $\Omega$.
	
  Since this betting portfolio is constant and $P$ is system-$2$-coherent, Lemma \ref{constantBet} implies that $P(A_{1}) = \sum_{i=2}^{\infty}{P(A_{i})}$.
\end{proof}

\begin{lemma}
  \label{bet2coh}
  Let $\mathcal{C}$ be a field and $P$ be a countably additive probability. For every betting portfolio $(\alpha_{i},A_{i})_{i \in \mathbb{Z}^{+}}$ in betting system $2$, $E_{P^{*}}[\sum_{i=1}^{\infty}{\alpha_{i}(I_{A_{I}}-P(A_{i}))}] = 0$. Hence, $P$ is system-$2$-coherent.
\end{lemma}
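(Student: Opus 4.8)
The plan is to prove the displayed identity $E_{P^*}[\sum_{i=1}^{\infty}\alpha_i(I_{A_i}-P(A_i))] = 0$ and then read off coherence, since a random variable with $P^*$-mean zero cannot be bounded above by $-\epsilon$. First I would note that each summand has expectation zero: because $A_i \in \mathcal{C}$ and $P^{*}$ agrees with $P$ on $\mathcal{C}$, we have $E_{P^{*}}[I_{A_i}] = P^{*}(A_i) = P(A_i)$, so $E_{P^{*}}[\alpha_i(I_{A_i}-P(A_i))] = 0$. Writing $S_n = \sum_{i=1}^{n}\alpha_i(I_{A_i}-P(A_i))$ for the partial sums, linearity of expectation then gives $E_{P^{*}}[S_n] = 0$ for every $n$. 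By the hypothesis defining System 2, the $S_n$ converge pointwise on $\Omega$ to the balance $S$, and $S$ is $\sigma(\mathcal{C})$-measurable as a pointwise limit of measurable functions. It therefore suffices to justify passing the limit through the expectation, namely $E_{P^{*}}[S] = \lim_n E_{P^{*}}[S_n] = 0$.

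The key step, and the place where System 2's first condition $\sum_i |\alpha_i|P(A_i) < \infty$ is used, is the construction of an integrable dominating function. Set $g = \sum_{i=1}^{\infty} |\alpha_i|\,|I_{A_i}-P(A_i)|$. A direct computation gives $E_{P^{*}}[\,|I_{A_i}-P(A_i)|\,] = 2P(A_i)(1-P(A_i)) \leq 2P(A_i)$, so by Tonelli's theorem applied to the nonnegative terms,
\[
  E_{P^{*}}[g] = \sum_{i=1}^{\infty} |\alpha_i|\,E_{P^{*}}[\,|I_{A_i}-P(A_i)|\,] \leq 2\sum_{i=1}^{\infty} |\alpha_i|P(A_i) < \infty .
\]
Thus $g$ is integrable (and in particular finite almost surely). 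Since $|S_n| \leq g$ for every $n$ by the triangle inequality, the Dominated Convergence Theorem yields $E_{P^{*}}[S] = \lim_n E_{P^{*}}[S_n] = 0$, as claimed.

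To conclude coherence I would argue by contradiction. If $P$ were system-2-incoherent, there would be a portfolio in System 2 and an $\epsilon > 0$ with $S(w) \leq -\epsilon$ for all $w \in \Omega$. Monotonicity of the integral would then force $E_{P^{*}}[S] \leq -\epsilon < 0$, contradicting the identity just established. Hence no such portfolio exists and $P$ is system-2-coherent.

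I expect the main obstacle to be precisely the interchange of summation and integration. The subtlety is that System 2 guarantees only conditional (pointwise) convergence of the balance, not pointwise absolute convergence, so one cannot naively integrate term by term. The device that resolves this is to dominate every partial sum by the single function $g$ and to verify, via Tonelli and the finiteness of $\sum_i |\alpha_i|P(A_i)$, that $g$ is integrable; this reduces the matter to a routine application of dominated convergence. Along the way I would also check the minor well-definedness points, namely that $S$ is $\sigma(\mathcal{C})$-measurable and that the dominating bound $|S_n| \leq g$ holds everywhere (trivially so where $g = \infty$, which is a $P^{*}$-null set).
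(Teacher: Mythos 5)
Your proof is correct, and it reaches the same conclusion by a slightly different technical route. The paper splits the portfolio into the subsequences with $\alpha_i \geq 0$ and $\alpha_i < 0$, applies the Monotone Convergence Theorem to each nonnegative series $\sum_i \beta_i I_{B_i}$ and $\sum_i (-\gamma_i) I_{C_i}$ to show both are integrable with expectation $\sum_i \beta_i P(B_i)$ and $\sum_i(-\gamma_i)P(C_i)$ respectively, and then writes the balance (a.s.) as $X_B + X_C - E_{P^*}[X_B] - E_{P^*}[X_C]$. You instead keep the original ordering, dominate every partial sum $S_n$ by the single function $g = \sum_i |\alpha_i|\,|I_{A_i}-P(A_i)|$, verify $E_{P^*}[g] \leq 2\sum_i|\alpha_i|P(A_i) < \infty$ via Tonelli, and invoke dominated convergence. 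Both arguments hinge on exactly the same hypothesis of System 2, namely $\sum_i |\alpha_i|P(A_i) < \infty$; the computation $E_{P^*}[|I_{A_i}-P(A_i)|] = 2P(A_i)(1-P(A_i))$ you use here is in fact the one the paper deploys later in Lemma \ref{bet2Acoh} for System 2A. A small advantage of your version is that it works directly with the partial sums in the given order, so you never need the rearrangement identity $\sum_i \alpha_i I_{A_i} = X_B + X_C$, which in the paper's argument only holds a.s.\ $P^*$ (on the set where both monotone series are finite); the paper's version, in exchange, exhibits the balance explicitly as a difference of two integrable nonnegative variables. The concluding step (a mean-zero variable cannot be $\leq -\epsilon$ everywhere) matches the paper.
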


\begin{proof}
  Let $P$ be a countably additive probability and $(\alpha_{i},A_{i})_{i \in \mathbb{Z}^{+}}$ be an arbitrary betting portfolio in betting system $2$. Let $(\beta_{i},B_{i})_{i \in \mathbb{Z}^{+}}$ be the subsequence of $(\alpha_{i},A_{i})_{i \in \mathbb{Z}^{+}}$ such that $\alpha_{i} \geq 0$ and $(\gamma_{i},C_{i})_{i \in \mathbb{Z}^{+}}$ be the subsequence such that $\alpha_{i} < 0$. It follows from the Monotone Convergence Theorem (\cite{Billingsley}[p.~211]) that
  
\begin{align}
  \label{bet2cohEqn1}
    E_{P^{*}}\left[\sum_{i=1}^{\infty}{\beta_{i}I_{B_{i}}}\right]   &= \sum_{i=1}^{\infty}{\beta_{i}P(B_{i})} \leq \sum_{i=1}^{\infty}{|\alpha_{i}|P(A_{i})} < \infty \\
  \label{bet2cohEqn2}
    E_{P^{*}}\left[\sum_{i=1}^{\infty}{-\gamma_{i}I_{C_{i}}}\right] &= \sum_{i=1}^{\infty}{-\gamma_{i}P(C_{i})} \leq \sum_{i=1}^{\infty}{|\alpha_{i}|P(A_{i})} < \infty
\end{align}

Hence, $\sum_{i=1}^{\infty}{\beta_{i}I_{B_{i}}}$ and $\sum_{i=1}^{\infty}{\gamma_{i}I_{C_{i}}}$ converge a.s. $P^{*}$. Let $X_{B} = \sum_{i=1}^{\infty}{\beta_{i}I_{B_{i}}}$ and $X_{C} = \sum_{i=1}^{\infty}{\gamma_{i}I_{C_{i}}}$. It follows from equations \eqref{bet2cohEqn1} and \eqref{bet2cohEqn2} that

\begin{align*}
  E_{P^{*}}\left[\sum_{i=1}^{\infty}{\alpha_{i}(I_{A_{i}}-P(A_{i}))}\right] = E_{P^{*}}\left[X_{B}+X_{C}-E_{P^{*}}[X_{B}]-E_{P^{*}}[X_{C}]\right] = 0
\end{align*}

Thus, there exists $w \in \Omega$, such that $\sum_{i=1}^{\infty}{\alpha_{i}(I_{A_{i}}(w)-P(A_{i}))} \geq 0$ and the portfolio doesn't lead to sure loss. Conclude that $P$ is system-2-coherent. \end{proof}

Next, we show an example such that $P$ is coherent in betting system $2$ and admits no countably additive extension to $\mathcal{F}(\mathcal{C})$. That is, in betting system $2$, a coherent assignment of prices on $\mathcal{C}$ might not admit a coherent extension to $\mathcal{F}(\mathcal{C})$. In this respect, betting system $2$ differs from system $1$.

\begin{example}
  \label{bet2Example}
  Consider that $\mathcal{C}$ is a countable set of events which can be ordered in a sequence $(A_{i})_{i \geq 1}$ such that $A_{i} \searrow \emptyset$. Also consider that $1 \geq P(A_{i}) \searrow \delta > 0$. For example, take $\Omega = (0,1)$, $A_{i} = (0,\frac{1}{i+1})$ and $P((0,\frac{1}{i+1})) = \frac{1}{i+1} + \delta$. $P$ is system-2-coherent but admits no coherent extension to $\mathcal{F}(\mathcal{C})$. 
\end{example}

\begin{proof}
  Assume $P$ admits a system-2-coherent extension to $\mathcal{F}(\mathcal{C})$. Hence, by Theorem \ref{bet2equiv} and Carath\'{e}odory's extension theorem, there exists an extension of these prices to $\sigma(\mathcal{C})$ which is a countably additive probability. This is a contradiction since $A_{i} \searrow \emptyset$ and $P(A_{i}) \searrow \delta > 0$. Hence, $P$ doesn't admit a system-$2$-coherent extension to $\mathcal{F}(\mathcal{C})$.

  Next, we prove by contradiction that $P$ is system-2-coherent on $\mathcal{C}$. Assume there exists $(\beta_{i},B_{i})_{i \in \mathbb{Z}^{+}}$ in betting system $2$ which leads to uniform sure loss. From the definition of betting system $2$, $\sum_{i=1}^{\infty}{|\beta_{i}|P(B_{i})} < \infty$ and thus:

  \begin{align}
    \label{bet2ExampleEqn1}
    \sum_{i=1}^{\infty}{|\beta_{i}(I_{B_{i}}-P(B_{i}))|}	&\leq \sum_{i=1}^{\infty}{|\beta_{i}|I_{B_{i}}} + \sum_{i=1}^{\infty}{|\beta_{i}|P(B_{i})}\\
								&\leq (1+\frac{1}{\delta})\sum_{i=1}^{\infty}{|\beta_{i}|P(B_{i})} < \infty \notag
  \end{align}

  Let $\mu$ be the counting measure on $\mathbb{Z}^{+}$ and $f: \mathbb{Z}^{+} \rightarrow \mathbb{Z}^{+}$, where $f(i)$ is the integer such that $B_{i} = A_{f(i)}$.

  \begin{align}
    \label{bet2ExampleEqn2}
    \sum_{i=1}^{\infty}{\beta_{i}(I_{B_{i}}-P(B_{i}))} = \int_{\mathbb{Z}^{+}}{\int_{\mathbb{Z}^{+}}{I_{\{f(i)\}}(j)\beta_{i}(I_{A_{j}}-P(A_{j}))\mu(dj)}\mu(di)}
  \end{align}

  Hence, from equations \eqref{bet2ExampleEqn1} and \eqref{bet2ExampleEqn2}, for all $w \in \Omega$,

  \begin{align}
    \label{bet2ExampleEqn3}
    \int_{\mathbb{Z}^{+}}{\int_{\mathbb{Z}^{+}}{I_{\{f(i)\}}|\beta_{i}(I_{A_{j}}(w)-P(A_{j}))|\mu(dj)}\mu(di)} < \infty
  \end{align}

  Conclude from applying Fubini's Theorem (\cite{Billingsley}[p.~238]) to equations \eqref{bet2ExampleEqn2} and \eqref{bet2ExampleEqn3} that

  \begin{align}
    \label{bet2ExampleEqn4}
    \sum_{i=1}^{\infty}{\beta_{i}(I_{B_{i}}-P(B_{i}))}	=	\sum_{j=1}^{\infty}{\left(\sum_{i \in f^{-1}[j]}{\beta_{i}}\right)(I_{A_{j}}-P(A_{j}))}
  \end{align}

  Define $\alpha_{i} = \left(\sum_{j \in f^{-1}[i]}{\beta_{j}}\right)$. Equation \eqref{bet2ExampleEqn4} and the sure loss of $(\beta_{i},B_{i})_{i \in \mathbb{Z}^{+}}$ imply that $(\alpha_{i},A_{i})_{i \in \mathbb{Z}^{+}}$ leads to sure loss, which equivalently means,

  \begin{align*}
    \exists \epsilon > 0: \forall w \in \Omega, \sum_{i=1}^{\infty}{\alpha_{i}I_{A_{i}}} \leq \sum_{i=1}^{\infty}{\alpha_{i}P(A_{i})} - \epsilon
  \end{align*}

  By construction, $\sum_{i=1}^{\infty}{|\alpha_{i}|P(A_{i})} \leq \sum_{i=1}^{\infty}{|\beta_{i}|P(B_{i})} < \infty$. Hence,

  \begin{align}
    \label{bet2ExampleEqn5}
    \exists \epsilon^{*} > 0, \exists k^{*} \in \mathbb{Z}^{+}: \forall w \in \Omega, \sum_{i=1}^{\infty}{\alpha_{i}I_{A_{i}}} \leq \sum_{i=1}^{k^{*}}{\alpha_{i}P(A_{i})} - \epsilon^{*}
  \end{align}

  Since $A_{i} \searrow \emptyset$, for all $w \in \Omega-A_{k^{*}+1}$, $\sum_{i=1}^{\infty}{\alpha_{i}I_{A_{i}}(w)} = \sum_{i=1}^{k^{*}}{\alpha_{i}I_{A_{i}}(w)}$. Let $w^{*} \in A_{k^{*}}-A_{k^{*}+1}$. Similarly, $\forall w \in A_{k^{*}+1}$, $\sum_{i=1}^{\infty}{\alpha_{i}I_{A_{i}}(w)} = \sum_{i=1}^{k^{*}}{\alpha_{i}I_{A_{i}}(w^{*})}$. It follows from equation \eqref{bet2ExampleEqn5} that

  \begin{align*}
    \forall w \in \Omega, \sum_{i=1}^{k^{*}}{\alpha_{i}(I_{A_{i}}-P(A_{i}))} \leq -\epsilon^{*}
  \end{align*}

  Thus, $P$ is system-1-incoherent. Since $P$ can be extended to a finitely additive probability on $\mathcal{F}(\mathcal{C})$, it follows from \cite{deFinetti2} that $P$ is system-$1$-coherent, a contradiction. Conclude that $P$ is system-2-coherent.
\end{proof}

We now consider that $P$ is a fixed system-2-coherent assignment on a field $\mathcal{C}$ and inquire about the space of balances generated by betting portfolios in betting system $2$. Theorem \ref{bet2Balances} characterizes this space as all $\sigma(\mathcal{C})$-measurable random variables, $X$, such that $E_{P^{*}}[X] = 0$. Observe that, by construction of betting system $2B$ all balances, $X$, are such that $E_{P^{*}}[X] = 0$ and $X$ is bounded. Hence, the space of balances generated by betting system $2$ is larger than that generated by $2B$.

\begin{theorem}
  \label{bet2Balances}
  Let $\mathcal{C}$ be a field and $P$ be system-$2$-coherent. For every $\sigma(\mathcal{C})$-measurable random variable $X$, there exists a betting portfolio in betting system 2 with balance $B$ which is a version of $X$ iff $E_{P^{*}}[X] = 0$.
\end{theorem}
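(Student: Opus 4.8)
The plan is to prove the two implications separately, with essentially all the work in the reverse direction.

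For the forward implication, suppose some betting portfolio $(\alpha_i,A_i)_{i\in\mathbb{Z}^+}$ in system $2$ has balance $B$ that is a version of $X$. Since $P$ is system-$2$-coherent, Theorem \ref{bet2equiv} shows $P$ is a countably additive probability, so Lemma \ref{bet2coh} applies and gives $E_{P^*}[B]=0$; the proof of that lemma also exhibits $B$ as $X_B+X_C$ minus constants, so $B\in L^1(P^*)$. Because $B=X$ a.s.\ $P^*$, we conclude $X\in L^1(P^*)$ and $E_{P^*}[X]=E_{P^*}[B]=0$.

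For the reverse implication I must, given a $\sigma(\mathcal{C})$-measurable $X$ with $E_{P^*}[X]=0$, manufacture a system-$2$ portfolio whose balance is a version of $X$. The starting point is that finite linear combinations of indicators of $\mathcal{C}$-sets are dense in $L^1(P^*)$: simple functions are dense, and each $I_E$ with $E\in\sigma(\mathcal{C})$ is approximated by $I_C$, $C\in\mathcal{C}$, since $\|I_E-I_C\|_1=P^*(E\triangle C)$ and the field $\mathcal{C}$ approximates $\sigma(\mathcal{C})$ in $P^*$-measure. I would then choose $\mathcal{C}$-simple $f_n$ with $\|f_n-X\|_1$ summable, so that $f_n\to X$ a.s., and set $f_0=0$, $g_n=f_n-f_{n-1}$. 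Because $\mathcal{C}$ is a field, each $g_n$ can be rewritten as a combination $\sum_k c_{n,k}I_{D_{n,k}}$ over the finitely many disjoint atoms $D_{n,k}\in\mathcal{C}$ it determines, whence $\sum_k|c_{n,k}|P(D_{n,k})=\|g_n\|_1$. Concatenating these finite blocks into one sequence $(\alpha_i,A_i)$ gives $\sum_i|\alpha_i|P(A_i)=\sum_n\|g_n\|_1<\infty$, which is the price condition of system $2$; moreover, since the atoms inside block $n$ are disjoint, the partial sums of $\sum_i\alpha_iI_{A_i}(w)$ pass through exactly the values $f_n(w)$, so the balance converges at $w$ precisely when $f_n(w)$ converges, and where it converges it equals $X(w)$. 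By Lemma \ref{bet2coh} the resulting balance then has $P^*$-mean $0$, consistent with $E_{P^*}[X]=0$.

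The main obstacle is that system $2$ demands pointwise convergence of the balance at \emph{every} $w\in\Omega$, whereas the approximation above only yields convergence of $f_n(w)$ off a $P^*$-null set $N\in\sigma(\mathcal{C})$, on which the balance is unconstrained in value yet must still converge. The device I would use exploits again that $\mathcal{C}$ is a field: for a $\mathcal{C}$-simple function the super-level sets $\{|g_n|>t\}$ lie in $\mathcal{C}$, so I may censor the large increments on $B_n=\{|f_{n+1}-f_n|>2^{-n}\}\in\mathcal{C}$, replacing $g_{n+1}$ by $\hat g_{n+1}=(f_{n+1}-f_n)I_{B_n^{c}}$ with $\|\hat g_{n+1}\|_\infty\le 2^{-n}$; the truncated increments are then dominated by $\sum 2^{-n}$ and converge absolutely at \emph{every} $w$, while (choosing $\|f_n-X\|_1$ to decay fast enough that Markov's inequality makes $\sum_n P^*(B_n)<\infty$) the removed mass $\sum_n(f_{n+1}-f_n)I_{B_n}$ is as $L^1$-small as desired. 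Truncation alone forces a bounded pointwise limit, so it settles the bounded bulk of $X$ but not its unbounded part; for the latter I would place the large values of $X$ on disjoint $\mathcal{C}$-events, so that each $w$ meets only finitely many of the high-coefficient bets and their contribution converges trivially everywhere. Reconciling everywhere-convergence with the representation of an \emph{unbounded} $X$---combining this disjoint-support treatment of the large values with a residual/layered iteration that reinstates the censored $L^1$-small mass until the residual vanishes in $L^1$---is the delicate heart of the argument. Once a system-$2$ portfolio with an everywhere-convergent balance $B=X$ a.s.\ is in hand, Lemma \ref{bet2coh} certifies $E_{P^*}[B]=0$ and the construction certifies $B$ is a version of $X$, completing the reverse implication and hence the theorem.
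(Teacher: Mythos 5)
Your forward implication matches the paper's and is fine. The reverse implication, however, has a genuine gap that you yourself flag: system $2$ requires the balance $\sum_i \alpha_i(I_{A_i}-P(A_i))$ to converge at \emph{every} $w\in\Omega$, and your $L^1$-approximation scheme (choosing $\mathcal{C}$-simple $f_n$ with $\|f_n-X\|_1$ summable) only delivers convergence of the partial sums off a $P^*$-null set. The repair you sketch does not close this. The censoring of increments on $B_n=\{|f_{n+1}-f_n|>2^{-n}\}$ forces an everywhere-convergent but \emph{bounded} limit, so iterating it can only represent the bounded part of $X$; and the proposed "disjoint-support treatment of the large values" founders on the fact that the level sets $\{|X|>M\}$ live in $\sigma(\mathcal{C})$, not in $\mathcal{C}$, so replacing them by $\mathcal{C}$-events reintroduces exactly the a.s.-versus-everywhere discrepancy you are trying to eliminate. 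As written, the "delicate heart of the argument" is named but not supplied.

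The paper avoids this obstacle entirely by a different decomposition. Write $X^+=X\cdot I_{X>0}$ and $X^-=X\cdot I_{X<0}$ and represent $X^+$ as $\sum_{i}\sum_{j}\beta_{i,j}I_{B_{i,j}}$ with all $\beta_{i,j}>0$, coming from the increments $Y_n=X^+_n-X^+_{n-1}$ of a pointwise increasing sequence of simple functions $X^+_n\nearrow X^+$ on all of $\Omega$. Each $B_{i,j}\in\sigma(\mathcal{C})$ is then replaced by a countable disjoint family of $\mathcal{C}$-sets $B_{i,j,k}\subset B_{i,j}$ covering it up to a $P^*$-null set. The point is that the resulting series has \emph{one-signed} coefficients and each term is dominated by the corresponding term of a series summing pointwise to $X^+(w)<\infty$; a series of non-negative terms with bounded partial sums converges at every $w$ with no exceptional set, so everywhere-convergence is automatic rather than something to be engineered. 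Doing the same for $X^-$ and interleaving the two portfolios gives the price bound $\sum_i|\alpha_i|P(A_i)=E_{P^*}[X^+]-E_{P^*}[X^-]<\infty$ and a balance equal to $X$ a.s.\ $P^*$. If you want to salvage your route, this positive/negative-part monotone representation is the missing idea; without it (or an equivalent device) the reverse implication is not proved.
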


\begin{proof}
  Let $(\alpha_{i},A_{i})_{i \in \mathbb{Z}^{+}}$ be a betting portfolio in betting system $2$. Lemma \ref{bet2coh} implies that $E_{P^{*}}[\sum_{i=1}^{\infty}{\alpha_{i}(I_{A_{i}}-P(A_{i}))}] = 0$. Hence, if $\sum_{i=1}^{\infty}{\alpha_{i}(I_{A_{i}}-P(A_{i}))} = X$ a.s. $P^{*}$, $E_{P^{*}}[X] = 0$.

  Assume $E_{P^{*}}[X] = 0$. Let $X^{+} = X \cdot I_{X > 0}$ and $X^{-} = X \cdot I_{X < 0}$. Using \cite{Billingsley}[p.~186], there exists a sequence of $\sigma(\mathcal{C})$-measurable non-negative simple functions, $X^{+}_{n}$, such that $X^{+}_{n} \nearrow X^{+}$ pointwise on $\Omega$.
	
  Take $Y_{1}=X^{+}_{1}$ and, for $n \geq 2$, $Y_{n} = X^{+}_{n}-X^{+}_{n-1}$. $Y_{n}$ are $\sigma(\mathcal{C})$-measurable non-negative simple functions such that $\sum_{i=1}^{n}{Y_{i}} \nearrow X^{+}$. For all $i \in \mathbb{Z}^{+}$, there exists $m_{i} \in \mathbb{Z}^{+}$, $\sigma(\mathcal{C})$-measurable $B_{i,1},\ldots,B_{i,m_{i}}$ and positive $\beta_{i,1},\ldots,\beta_{i,m_{i}}$ such that $Y_{i} = \sum_{j=1}^{m_{i}}{\beta_{i,j}I_{B_{i,j}}}$. Hence, $X^{+} = \sum_{i=1}^{\infty}{\sum_{j=1}^{m_{i}}{\beta_{i,j}I_{B_{i,j}}}}$.
	
  Using \cite{Billingsley}[p.~167], for each $B_{i,j}$, there exists a sequence $(B_{i,j,k})_{k \in \mathbb{Z}^{+}}$ of disjoint events on $\mathcal{C}$ such that, for all $k$, $B_{i,j,k} \subset B_{i,j}$ and $P^{*}(B_{i,j}-\bigcup_{k \in \mathbb{Z}^{+}}{B_{i,j,k}}) = 0$. Hence, conclude that $X^{+} = \sum_{i=1}^{\infty}{\sum_{j=1}^{m_{i}}{\sum_{k=1}^{\infty}{\beta_{i,j}I_{B_{i,j,k}}}}}$ a.s. $P^{*}$. 
	
  Take $f$ as a bijection from $\mathbb{Z}^{+}$ to the span of the indexes $(i,j,k)$ and let $\beta_{i,j,k} = \beta_{i,j}$. $X^{+} = \sum_{i=1}^{\infty}{\beta_{f(i)}I_{B_{f(i)}}}$ a.s. $P^{*}$. It follows from the Monotone Convergence Theorem that $\sum_{i=1}^{\infty}{\beta_{f(i)}P(B_{f(i)})} = E_{P^{*}}[X^{+}]$. Also, for every $w^{*} \in \{w \in \Omega: \sum_{i=1}^{\infty}{\beta_{f(i)}I_{B_{f(i)}}} \neq X^{+}\}$, it follows by construction that $\sum_{i=1}^{\infty}{\beta_{f(i)}I_{B_{f(i)}}(w^{*})} = 0$.
	
  Consider the same construction such that $-\sum_{i=1}^{\infty}{\gamma_{g(i)}I_{C_{g(i)}}} = X^{-}$ a.s. $P^{*}$ and also that $-\sum_{i=1}^{\infty}{\gamma_{g(i)}P(C_{g(i)})} = E_{P^{*}}[X^{-}]$. Let $(\alpha_{i},A_{i})_{i \in \mathbb{Z}^{+}}$ be the betting portfolio such that $(\alpha_{2i},A_{2i})=(\beta_{i},B_{i})$ and $(\alpha_{2i-1},A_{2i-1})=(-\gamma_{i},C_{i})$. Thus,	

  \begin{align*}
    \sum_{i=1}^{\infty}{\alpha_{i}(I_{A_{i}}-P(A_{i}))} =  \sum_{i=1}^{\infty}{\alpha_{i}I_{A_{i}}} - E[X] = \sum_{i=1}^{\infty}{\alpha_{i}I_{A_{i}}}
  \end{align*}

  Conclude that $\sum_{i=1}^{\infty}{\alpha_{i}(I_{A_{i}}-P(A_{i}))} = X$ a.s. $P^{*}$. It remains to show that $(\alpha_{i},A_{i})_{i \in \mathbb{Z}^{+}}$ is in betting system $2$. First, $\sum_{i=1}^{\infty}{|\alpha_{i}|P(A_{i})} = E_{P^{*}}[X^{+}]-E_{P^{*}}[X^{-}] < \infty$. Next, for any $w^{*} \in \{w \in \Omega: \sum_{i=1}^{\infty}{\alpha_{i}I_{A_{i}}} \neq X\}$, $\sum_{i=1}^{\infty}{\alpha_{i}I_{A_{i}}(w^{*})} = 0$. Conclude that $\sum_{i=1}^{\infty}{\alpha_{i}(I_{A_{i}}-P(A_{i}))}$ converges pointwise on $\Omega$ and $(\alpha_{i},A_{i})_{i \in \mathbb{Z}^{+}}$ is in betting system 2.
\end{proof}

\section{Betting system 3}
\label{bet3}

Next, Theorem \ref{bet3equiv} establishes a necessary and sufficient condition for $P$ to be system-3-coherent when $\mathcal{C}$ is a field. We call $A$ an atom if $P(A) > 0$ and $\forall B \in \mathcal{C}$ such that $B \subset A$, $P(B) = P(A)$ or $P(B) = 0$.

\begin{theorem}
  \label{bet3equiv}
  If $\mathcal{C}$ is a field, $P$ is system-3-coherent iff $P$ is a countably additive probability and $\Omega$ admits a finite partition in $\mathcal{C}$-measurable atoms.
\end{theorem}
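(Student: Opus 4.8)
The plan is to prove both implications, using Theorem~\ref{bet2equiv} to dispose of the countable additivity clause and isolating the finite atomic partition as the device that controls conditional convergence. For the necessity direction, first note that system $3$ contains system $2$, so any system-$2$ portfolio yielding uniform sure loss is also a system-$3$ portfolio yielding uniform sure loss; hence system-$3$-coherence implies system-$2$-coherence, and Theorem~\ref{bet2equiv} gives that $P$ is a countably additive probability. It then remains to show that system-$3$-coherence forces a finite atomic partition, and I would argue the contrapositive: if $\Omega$ admits no finite partition into $\mathcal{C}$-measurable atoms, I will exhibit a system-$3$ portfolio with constant nonzero balance, so that Lemma~\ref{constantBet} makes $P$ incoherent.

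For sufficiency, suppose $P$ is countably additive and $\Omega = E_1 \sqcup \cdots \sqcup E_n$ with each $E_j$ an atom, and let $(\alpha_i,A_i)_{i\in\mathbb{Z}^{+}}$ be an arbitrary system-$3$ portfolio; I must produce some $\omega$ at which the balance is nonnegative. For each atom $E_j$ and each $i$, the set $A_i \cap E_j \in \mathcal{C}$ is a subset of the atom $E_j$, so $P(A_i \cap E_j) \in \{0, P(E_j)\}$; writing $\chi_{ij}$ for the corresponding value in $\{0,1\}$, one has $I_{A_i} = \chi_{ij}$ almost surely on $E_j$, and outside the countable union of the exceptional null sets (a set of full measure in $E_j$, hence nonempty) the balance equals the constant $\sigma_j - S$, where $S = \sum_i \alpha_i P(A_i)$ and $\sigma_j = \sum_i \alpha_i \chi_{ij}$. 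The latter series converges because the balance converges pointwise there and $S$ converges. Since there are only finitely many atoms, the interchange $\sum_{j=1}^{n} P(E_j)\sigma_j = \sum_i \alpha_i \sum_{j=1}^{n} P(E_j)\chi_{ij} = \sum_i \alpha_i P(A_i) = S$ is legitimate, whence $\sum_{j=1}^{n} P(E_j)(\sigma_j - S) = S - S = 0$. A weighted average of the numbers $\sigma_j - S$ with positive weights $P(E_j)$ vanishes, so $\sigma_j - S \ge 0$ for at least one $j$, and any $\omega$ in the full-measure good part of that atom has nonnegative balance. Thus no uniform sure loss is possible. The finiteness of the partition is essential precisely because it licenses the interchange of the $j$-sum with the conditionally convergent $i$-sum.

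For the necessity construction I would first prove a structural lemma: if $\Omega$ is not a finite disjoint union of atoms, then $\mathcal{C}$ contains infinitely many pairwise disjoint events of positive price. This follows from maximality — were every pairwise disjoint family of positive-price events finite, a family $E_1,\ldots,E_n$ of maximum size would have to consist of atoms (any non-atom could be split into two positive pieces, enlarging the family) and to exhaust $\Omega$ up to a null set (a positive complement would enlarge the family); absorbing that null remainder into $E_1$ would then give a finite atomic partition, a contradiction. Given disjoint $D_1, D_2, \ldots$ with $P(D_j) > 0$, set $T_k = \bigcup_{j \ge k} D_j$, so $P(T_k) = \sigma_k \searrow 0$ while every $\omega$ lies in only finitely many $T_k$. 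Choosing multiplicities $c_k$ with $\sum_k c_k \sigma_k = \infty$ and listing each $T_k$ exactly $c_k$ times, I obtain events $B_1, B_2, \ldots$ with $\sum_m P(B_m) = \infty$ yet $\sum_m I_{B_m}(\omega) < \infty$ for every $\omega$. I then pair each $B_m$ with a buy and a sell, taking coefficients $+1$ and $-1$ on two copies of $B_m$. The indicator part $\sum \pm I_{B_m}(\omega)$ is absolutely convergent at every $\omega$, being dominated by $2\sum_m I_{B_m}(\omega) < \infty$, and sums to $0$, hence is invariant under reordering; the price, by contrast, is a series whose terms are $\pm P(B_m) \to 0$ with both its positive and negative parts summing to $\infty$.

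The heart of the argument, and the step I expect to be the main obstacle, is the final decoupling: I would apply the Riemann rearrangement theorem to reorder this single list so that the price converges to some $S \neq 0$, while the reordering leaves the absolutely convergent indicator part equal to $0$ at every $\omega$; consequently the balance converges pointwise to the constant $-S \neq 0$. The resulting portfolio lies in system $3$, since its price converges and its balance converges pointwise, so Lemma~\ref{constantBet} yields incoherence. The delicate point is exactly engineering the indicator part to converge absolutely (hence rearrangement-stably) while the price converges only conditionally, so that one and the same rearrangement shifts the balance by the identical nonzero constant at every point of $\Omega$; this generalizes Beam's rearrangement example, and it is the mirror image of the finite interchange that drives the sufficiency proof.
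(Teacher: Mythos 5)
Your architecture matches the paper's (reduce countable additivity to system~2, prove a structural lemma identifying ``no finite atomic partition'' with ``infinitely many disjoint positive-price events,'' exhibit a constant nonzero balance via conditional convergence, and for sufficiency average the balance over the finitely many atoms), and your sufficiency half is essentially the paper's Lemma~\ref{bet3coh} in a slightly more elementary, expectation-free form: evaluating the balance at a point in the full-measure core of each atom to get convergence of $\sigma_j$, then using the finite weighted average $\sum_j P(E_j)(\sigma_j-S)=0$. That part is correct. The problem is in your necessity construction: the tail events $T_k=\bigcup_{j\ge k}D_j$ are countable unions of members of the field $\mathcal{C}$ and in general do \emph{not} belong to $\mathcal{C}$, so bets on them are not admissible --- prices are only assigned to events in $\mathcal{C}$, and a betting portfolio must have $A_i\in\mathcal{C}$. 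As written, your sure-loss portfolio does not exist. The gap is repairable, and the repair collapses your construction onto the paper's: bet directly on the $D_j$ themselves. Either give $D_j$ multiplicity $c_j\approx 1/P(D_j)$ with coefficients $\pm1$ (so the price series still has divergent positive and negative parts with terms $\to 0$, while the indicator part at any $\omega$ has at most $2c_{j_0}$ nonzero terms and is absolutely convergent), and then run your Riemann rearrangement; or, as the paper does, put coefficient $(-1)^j/(jP(D_j))$ on $D_j$ so the price is the alternating harmonic series, permute it to change its sum, and interleave the original with the negated permuted copy --- disjointness of the $D_j$ makes the indicator parts cancel pointwise in finitely many terms. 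Your rearrangement idea is sound; it just must be executed on events that are actually in $\mathcal{C}$.

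A secondary, smaller issue: in your structural lemma you invoke ``a family of maximum size'' among pairwise disjoint positive-price families, but the hypothesis that every such family is finite does not by itself furnish a family of \emph{maximum} cardinality (the sizes could a priori be unbounded), and a maximal-by-inclusion family obtained from Zorn's lemma need not consist of atoms. The paper's Lemma~\ref{finiteSupport} avoids this by arguing the contrapositive directly: if $\Omega$ is not finitely atomizable, recursively split a non-finitely-atomizable piece into two positive-measure parts, keep the non-finitely-atomizable part, and discard the other into the disjoint sequence. Your parenthetical splitting remark contains exactly the needed idea; it just needs to be run as a recursion rather than hung on an unestablished maximum.
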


\begin{proof}
  We prove Theorem \ref{bet3equiv} in three steps. First, we establish Lemma \ref{finiteSupport}, which states two conditions equivalent to being able to partition $\Omega$ into a finite number of atoms. Lemma \ref{bet3finiteSupport} establishes that a price assignment is coherent in betting system $3$ only if it is a countably additive probability and $\Omega$ admits a finite partition in $\mathcal{C}$-measurable atoms. Lemma \ref{bet3coh} establishes the reverse implication.
\end{proof}

\begin{definition}
  \label{p-finite}
  Let $P$ be a countably additive probability. P is p-finite \cite{Adams} if, for every $(B_{i})_{i \in \mathbb{Z}^{+}}$ in $\mathcal{F}$ such that, for all $i \in \mathbb{Z}^{+}$, $B_{i+1} \subset B_{i}$ and $\lim_{i \rightarrow \infty}{P(B_{i})} = 0$, there exists $j \in \mathbb{Z}^{+}$ such that $P(B_{j}) = 0$.
\end{definition}

\begin{lemma}
  \label{finiteSupport}
  Consider $(\Omega, \mathcal{F}, P)$ such that $\mathcal{F}$ is a field on $\Omega$ and $P$ is a countably additive probability. The following propositions are equivalent:

  \begin{enumerate}[label=(\roman{*}), ref=(\roman{*})]
    \item There is no sequence $(A_{i})_{i \in \mathbb{Z}^{+}}$ of mutually exclusive events of $\mathcal{F}$ such that $P(A_{i}) > 0$ for all $i \in \mathbb{Z}^{+}$.
    \item $\Omega$ admits a finite partition in measurable atoms.
    \item $P$ is p-finite.
  \end{enumerate}
\end{lemma}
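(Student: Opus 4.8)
The plan is to prove the three propositions equivalent through the cycle $(ii) \Rightarrow (i) \Rightarrow (iii) \Rightarrow (ii)$, keeping the arguments inside the field $\mathcal{F}$ and invoking the Carath\'{e}odory extension $P^{*}$ only where convenient. For $(ii) \Rightarrow (i)$, suppose $\Omega = E_{1} \sqcup \cdots \sqcup E_{n}$ is a finite partition into atoms and, for contradiction, that $(A_{j})_{j \in \mathbb{Z}^{+}}$ is an infinite disjoint family with $P(A_{j}) > 0$. Since $P(A_{j}) = \sum_{k=1}^{n} P(A_{j} \cap E_{k})$, some intersection $A_{j} \cap E_{k}$ is positive, and the atom property forces $P(A_{j} \cap E_{k}) = P(E_{k})$. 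As there are infinitely many $j$ but only $n$ atoms, two indices $j \neq j'$ attach to the same atom $E_{k}$; then $A_{j} \cap E_{k}$ and $A_{j'} \cap E_{k}$ are disjoint subsets of $E_{k}$ each of measure $P(E_{k})$, so $2 P(E_{k}) \leq P(E_{k})$, a contradiction.

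For $(i) \Rightarrow (iii)$, let $(B_{i})$ be decreasing with $P(B_{i}) \to 0$ and suppose, toward a contradiction, that $P(B_{i}) > 0$ for every $i$. The differences $D_{i} = B_{i} \setminus B_{i+1}$ lie in $\mathcal{F}$, are mutually exclusive, and satisfy $P(D_{i}) = P(B_{i}) - P(B_{i+1})$. If only finitely many $D_{i}$ had positive measure, then $P(B_{i})$ would be eventually constant, hence eventually $0$ by $P(B_{i}) \to 0$, contradicting positivity; so infinitely many $D_{i}$ are positive and form exactly the family forbidden by $(i)$.

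The remaining implication $(iii) \Rightarrow (ii)$ is the crux, and I would argue its contrapositive. If $\Omega$ admits no finite partition into atoms, I split into two cases. If some $A \in \mathcal{F}$ with $P(A) > 0$ is non-atomic, I would build a decreasing sequence $A = B_{0} \supseteq B_{1} \supseteq \cdots$ inside $A$ with $0 < P(B_{m}) \leq P(A)/2^{m}$, repeatedly extracting from $B_{m}$ a subset of at most half its measure (replacing a chosen subset by its relative complement when needed); this stays in $\mathcal{F}$ and witnesses the failure of p-finiteness. Otherwise every positive set contains an atom, and the absence of a finite atomic partition yields infinitely many pairwise essentially-disjoint atoms $E_{1}, E_{2}, \ldots$, which after disjointifying are genuinely disjoint with $\sum_{i} P(E_{i}) < \infty$.

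The step I expect to be the main obstacle is precisely this last case: producing, from infinitely many disjoint atoms, an honest decreasing sequence inside $\mathcal{F}$ whose measure tends to $0$. The natural candidate $\bigcup_{i \geq n} E_{i}$ has the convergent tail measures $\sum_{i \geq n} P(E_{i}) \to 0$ while remaining positive, but this infinite union need not belong to the field $\mathcal{F}$. I would instead pass through the complements $\big(\bigcup_{i \leq n} E_{i}\big)^{c} \in \mathcal{F}$, which decrease with $P\big(\big(\bigcup_{i \leq n} E_{i}\big)^{c}\big) = 1 - \sum_{i \leq n} P(E_{i})$; this argument closes cleanly when $\sum_{i} P(E_{i}) = 1$, since then the complement measures tend to $0$ yet stay positive. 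The delicate point is the reduction of the general case to this one when $\sum_{i} P(E_{i}) < 1$: the leftover mass $1 - \sum_{i} P(E_{i})$ must be accounted for by elements of $\mathcal{F}$ disjoint from the atoms, and carrying out this bookkeeping inside the field, rather than in $\sigma(\mathcal{F})$, is where the real work of the proof lies.
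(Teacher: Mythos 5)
Your arrows $(ii) \Rightarrow (i)$ (pigeonhole on the atoms) and $(i) \Rightarrow (iii)$ (disjointifying a decreasing sequence via the differences $B_{i} \setminus B_{i+1}$) are both correct. Note that you run the cycle in the opposite direction from the paper, which proves $(i) \rightarrow (ii) \rightarrow (iii) \rightarrow (i)$; your choice concentrates the entire difficulty of the lemma into the single remaining arrow $(iii) \Rightarrow (ii)$. Within that arrow, your first case (a positive-measure set containing no atom, handled by repeated halving inside the field) is complete, but the second case --- infinitely many disjoint atoms $E_{1}, E_{2}, \ldots$ with $\sum_{i} P(E_{i}) < 1$ --- is a genuine gap, and you have located it precisely. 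The paper's corresponding step is $(iii) \rightarrow (i)$, proved by taking $B_{j} = \bigcup_{i \geq j} A_{i}$ and observing $P(B_{j}) \rightarrow 0$; as you observe, these tail unions need not belong to a field, so the paper's argument relies on $\sigma$-closure at exactly the spot where you get stuck.

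In fact the gap cannot be closed under the stated hypotheses: the implication out of $(iii)$ fails when $\mathcal{F}$ is merely a field. Take $\Omega = \mathbb{Z}^{+} \cup \{\omega_{0}\}$, let $\mathcal{F}$ consist of the finite subsets of $\mathbb{Z}^{+}$ together with their complements in $\Omega$, and set $P(\{n\}) = 2^{-n-1}$ and $P(\Omega \setminus F) = 1 - P(F)$. This $P$ is countably additive on $\mathcal{F}$ in the paper's sense, because any countable disjoint family in $\mathcal{F}$ whose union lies in $\mathcal{F}$ has only finitely many nonempty members. The singletons violate $(i)$, and there is no finite atomic partition, so $(ii)$ fails. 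Yet $P$ is p-finite: every cofinite member of $\mathcal{F}$ has measure at least $2^{-1}$, so a decreasing sequence with $P(B_{j}) \rightarrow 0$ is eventually a decreasing sequence of subsets of a single fixed finite set, hence eventually empty. Thus $(iii)$ does not imply $(i)$ or $(ii)$ over a field. The lemma, and your second case, are rescued by reading $\mathcal{F}$ as a $\sigma$-field (or by defining p-finiteness for $P^{*}$ on $\sigma(\mathcal{F})$); under that reading your second case closes immediately with $B_{n} = \bigcup_{i \geq n} E_{i}$, which is the paper's own construction.
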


\begin{proof}
  \textbf{$(i \rightarrow ii)$}: A measurable set $F$ is finitely atomizable (FA) if $F$ admits a partition in a finite number of atoms of $\mathcal{C}$.

  We prove $(\text{not } ii \rightarrow \text{not } i)$ by constructing $(A_{i})_{i \in \mathbb{Z}^{+}}$ such that for $i \neq j$, $A_{i} \cap A_{j} = \emptyset$ and $P(A_{i}) > 0$. In the following passage, we construct $(A_{i})_{i \in \mathbb{Z}^{+}}$ together with an auxiliary sequence $(B_{i})_{i \in \mathbb{Z}^{+}}$ such that $B_{i}$ is not FA and $B_{i} \cap A_{j} = \emptyset$ for $j \leq i$. $A_{i}$ and $B_{i}$ are defined in terms of $B_{i-1}$.

  Choose $B_{0} = \Omega$. Next, we show how to construct $B_{i}$ and $A_{i}$ in terms of $B_{i-1}$. By assumption, $B_{i-1}$ is not FA. Hence $B_{i-1}$ is not an atom and there exist measurable $F_{0,i}$ and $F_{1,i}$ which partition $B_{i-1}$ such that $P(F_{0,i}) > 0$ and $P(F_{1,i}) > 0$. Since $B_{i-1}$ is not FA, there exists $k \in \{0,1\}$ such that $F_{k,i}$ is not FA. Since $B_{i-1} \cap A_{j} = \emptyset$ for $j \leq i-1$, defining $A_{i} = F_{1-k,i} \subset B_{i-1}$, $(A_{j})_{j \leq i}$ are mutually disjoint. Taking $B_{i} = F_{k,i}$, $B_{i}$ is not FA and $B_{i} \cap A_{j} = \emptyset$ for $j \leq i$. By construction, $(A_{i})_{i \in \mathbb{Z}^{+}}$ is a sequence of mutually disjoint events with positive probability.

  \

  \textbf{$(ii \rightarrow iii)$}: Let $R = \{F_{1},\ldots,F_{n}\}$ be a partition of $\Omega$ in $\mathcal{F}$-measurable atoms. Since $P$ is finitely additive, $\forall i \in \mathbb{Z}^{+}$, $P(B_{i}) = \sum_{j=1}^{n}{P(B_{i} \cap F_{j})}$. Hence, since $F_{i}$ are atoms, if $P(B_{i}) \neq 0$, $P(B_{i}) \geq \min_{j \in \{1,\ldots,n\}}{P(F_{j})} > 0$. Thus, if $\lim_{i \rightarrow \infty}{P(B_{i})} = 0$, there exists $j \in \mathbb{Z}^{+}$ such that $P(B_{j}) = 0$.

  \

  \textbf{$(iii \rightarrow i)$}: We prove $(\text{not } i \rightarrow \text{not } iii)$. Let $(A_{i})_{i \in \mathbb{Z}^{+}}$ be disjoint events such that $\forall i \in \mathbb{Z}^{+}, P(A_{i}) > 0$. Let $B_{j} = \bigcup_{i=j}^{\infty}{A_{i}}$. By construction, $B_{j} \searrow \emptyset$ and $P(B_{j}) > 0$ for every $j \in \mathbb{Z}^{+}$. Since, $B_{j} \searrow \emptyset$ and $P$ is a countably additive probability, $\lim_{j \rightarrow \infty}{P(B_{j})} = 0$.
\end{proof}

\begin{lemma}
  \label{bet3finiteSupport} 
  If $\mathcal{C}$ is a field and $P$ is system-$3$-coherent, then $P$ is a countably additive probability such that $\Omega$ can be partitioned in a finite number of $\mathcal{C}$-measurable atoms.
\end{lemma}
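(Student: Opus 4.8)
The plan is to prove the two conclusions separately, disposing of countable additivity quickly and concentrating on the atomic partition. For countable additivity, I would only observe that system $3$ extends system $2$: every portfolio available in system $2$ is also available in system $3$, so system-$3$-coherence implies system-$2$-coherence, and Lemma~\ref{bet2countadd} (equivalently Theorem~\ref{bet2equiv}) then gives that $P$ is a countably additive probability. This step is routine.

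The substance is the finite atomic partition, which I would establish by contraposition: assuming $\Omega$ admits no finite partition into $\mathcal{C}$-measurable atoms, I will exhibit a portfolio in system $3$ witnessing incoherence. By Lemma~\ref{finiteSupport}, the failure of the partition is equivalent to its condition (i), so there is a sequence $(E_{m})_{m\in\mathbb{Z}^{+}}$ of pairwise disjoint events of $\mathcal{C}$ with $\pi_{m}:=P(E_{m})>0$; since $P$ is a probability, $\sum_{m}\pi_{m}\le 1$ and hence $\pi_{m}\to 0$. My target is a \emph{constant} portfolio whose balance is a nonzero constant: once it is shown to lie in system $3$, Lemma~\ref{constantBet} immediately yields incoherence, contradicting the hypothesis.

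The key idea, and the main obstacle, is to exploit that system $3$ requires only that the price $\sum_{i}\alpha_{i}P(A_{i})$ \emph{converge}, not converge absolutely. Fix a target $T\neq 0$ and transport a single ``position bump'' of $P^{*}$-mass $T$ outward along the $E_{m}$. Set $v_{m}:=T/\pi_{m}$ (so $v_{m}\pi_{m}=T$) and build the portfolio in stages. First, with $K_{0}$ bets each buying $v_{1}/K_{0}$ units of $E_{1}$, raise the cumulative position to $v_{1}$ on $E_{1}$. Then, for each $m$, transfer the bump from $E_{m}$ to $E_{m+1}$ in $K_{m}$ small steps, each step selling $d:=v_{m}/K_{m}$ units of $E_{m}$ and buying $d\,\pi_{m}/\pi_{m+1}$ units of $E_{m+1}$. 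Each transfer step has price $-d\pi_{m}+(d\pi_{m}/\pi_{m+1})\pi_{m+1}=0$, so after the initial build the cumulative price stays equal to $T$; choosing $K_{m}\to\infty$ forces every individual bet's price, of magnitude $T/K_{m}$, to tend to $0$, so the price series converges to $T$. Emptying $E_{m}$ of its total position $v_{m}$ fills $E_{m+1}$ to exactly $v_{m+1}=T/\pi_{m+1}$, so the bump is transported consistently, and each point of $E_{m}$ is touched by only the finitely many bets that fill and then empty $E_{m}$, whose coefficients sum to $v_{m}-v_{m}=0$. Consequently the limiting position is $0$ at every $w\in\Omega$ (including points outside $\bigcup_{m}E_{m}$, which are never bet on), the balance series converges pointwise to $0-T=-T$, and the portfolio is constant with balance $-T\neq 0$.

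What makes the construction delicate is exactly its system-$3$ membership: one must check that for each $w$ the balance converges in the stated order (the finitely many intermediate partial balances over $E_{m}$ spike to the large positive value $v_{m}-T$ before settling at $-T$, which is the ``mass escaping to infinity'' that defeats the $P^{*}$-mean-zero identity), and that the price converges although $\sum_{i}|\alpha_{i}P(A_{i})|=\infty$. This conditional convergence is essential: Lemma~\ref{bet2coh} shows no such constant portfolio can exist inside system $2$, so the whole effect lives in the gap between the two systems. Granting the construction, Lemma~\ref{constantBet} makes $P$ system-$3$-incoherent, completing the contraposition. I expect the bookkeeping of the staged transfer, ensuring each atom is touched finitely often and that both series converge, to be the only genuinely technical point.
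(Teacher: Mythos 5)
Your proposal is correct, and its overall skeleton matches the paper's: countable additivity via the containment of system $2$ in system $3$ and Lemma~\ref{bet2countadd}; reduction of the atomic-partition claim to condition (i) of Lemma~\ref{finiteSupport}; and a final appeal to Lemma~\ref{constantBet} after exhibiting a constant portfolio with nonzero balance in system $3$. Where you genuinely diverge is in the construction of that portfolio. The paper takes the portfolio $\bigl(\tfrac{(-1)^{i}}{iP(A_{i})},A_{i}\bigr)_{i\in\mathbb{Z}^{+}}$, whose price is the conditionally convergent alternating harmonic series, and interleaves it with a sign-flipped permuted copy: the indicator terms cancel pointwise because the $A_{i}$ are disjoint, while the prices $p$ and $p^{*}$ differ by Riemann rearrangement, leaving the constant balance $p-p^{*}\neq 0$. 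You instead run a gliding-hump transport: a position of $P^{*}$-mass $T$ is pushed outward along the disjoint events $E_{m}$ in zero-price transfer steps, so the price series converges (conditionally, via $K_{m}\to\infty$) to $T$ while the limiting position vanishes at every $w$, giving constant balance $-T$. Both constructions live precisely in the gap between conditional and absolute convergence of the price, and both are complete as written; yours requires slightly more bookkeeping (the staged ordering, the choice of $K_{m}$, the finite excursions of the partial balances on each $E_{m}$, all of which you correctly flag and handle), while the paper's gets the cancellation for free from disjointness plus a single permutation. Your version has the minor expository advantage of making the mechanism of failure explicit as ``mass escaping to infinity,'' which directly explains why the $P^{*}$-mean-zero identity of Lemma~\ref{bet2coh} breaks down in system $3$.
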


\begin{proof}
  Since every betting portfolio in betting system $3$ is also in betting system $2$, Lemma \ref{bet2countadd} implies that every system-3-coherent is a countably additive probability.

  Using Lemma \ref{finiteSupport} it remains to show that, if $P$ is system-3-coherent, there is no sequence of events $(A_{i})_{i \in \mathbb{Z}^{+}}$ such that, for all $i \neq j$, $A_{i} \cap A_{j} = \emptyset$ and for all $i \in \mathbb{Z}^{+}$, $P(A_{i}) > 0$. Assume there exists such a sequence. Consider the betting portfolio $(\frac{(-1)^{i}}{iP(A_{i})},A_{i})_{i \in \mathbb{Z}^{+}}$. The price of this portfolio is $p = \sum_{i=1}^{\infty}{\frac{(-1)^{i}}{i}}$. Since the price converges conditionally, there exists a permutation of $\mathbb{Z}^{+}$ - $\pi$ - such that $p^{*} = \sum_{i=1}^{\infty}{\frac{(-1)^{\pi(i)}}{\pi(i)}} \neq p$.

  Consider the betting portfolio $(\beta_{i},B_{i})_{i \in \mathbb{Z}^{+}}$ such that $(\beta_{2i-1},B_{2i-1}) = (\frac{(-1)^{i}}{iP(A_{i})},A_{i})$ and $(\beta_{2i},B_{2i}) = (\frac{(-1)^{\pi(i)+1}}{\pi(i)P(A_{\pi(i)})},A_{\pi(i)})$. The balance of this portfolio is 

  \begin{align*}
    \sum_{i=1}^{\infty}{\beta_{i}(I_{B_{i}}-P(B_{i}))}	&= \sum_{i}^{\infty}{\frac{(-1)^{i}}{iP(A_{i})}\left(I_{A_{i}}-P(A_{i})\right)} + \sum_{i}^{\infty}{\frac{(-1)^{\pi(i)+1}}{\pi(i)P(A_{\pi(i)})}\left(I_{A_{\pi(i)}}-P(A_{\pi(i)})\right)} =\\																											&= p-p^{*}+\sum_{i=1}^{\infty}{\left(\frac{(-1)^{i}I_{A_{i}}}{iP(A_{i})} - \frac{(-1)^{\pi(i)}I_{A_{\pi(i)}}}{\pi(i)P(A_{\pi(i)})} \right) = p-p^{*} \neq 0} 
  \end{align*}
\end{proof}

The last equality follows since the $A_{i}$ are disjoint. Since $(\beta_{i},B_{i})_{i \in \mathbb{Z}^{+}}$ is included in system $3$ and has a constant balance different from $0$, conclude from Lemma \ref{constantBet} that $P$ is system-3-incoherent, a contradiction.

\begin{lemma}
  \label{bet3coh}
  If $\mathcal{C}$ is a field, $P$ is a countably additive probability and $\Omega$ admits a finite partition in $\mathcal{C}$-measurable atoms, then for any betting portfolio $(\alpha_{i},A_{i})_{i \in \mathbb{Z}^{+}}$ in betting system $3$, $E_{P^{*}}[\sum_{i=1}^{\infty}{\alpha_{i}(I_{A_{i}}-P(A_{i}))}] = 0$. Thus, $P$ is system-$3$-coherent.
\end{lemma}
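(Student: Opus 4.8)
The plan is to exploit the finite atomic partition to show that the balance of any system-3 portfolio agrees almost surely with a \emph{simple} function, whose expectation is then computed directly to be zero; coherence follows because a random variable with zero expectation cannot be bounded below by a negative constant.

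First I would fix a finite partition $R = \{F_1, \ldots, F_n\}$ of $\Omega$ into $\mathcal{C}$-measurable atoms and record the key structural fact: for each event $A \in \mathcal{C}$ and each atom $F_j$, the subset $A \cap F_j \subseteq F_j$ must satisfy $P(A \cap F_j) \in \{0, P(F_j)\}$, so that $I_A$ is almost surely constant on $F_j$. Writing $c_{i,j} = 1$ when $P(A_i \cap F_j) = P(F_j)$ and $c_{i,j} = 0$ otherwise, I obtain $I_{A_i} = c_{i,j}$ $P$-a.s. on $F_j$ and $P(A_i) = \sum_{j=1}^n c_{i,j} P(F_j)$. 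Next I would assemble these into an almost-sure identification of the balance. For each $i$ the set where $I_{A_i} \neq c_{i,j}$ on $F_j$ is $P$-null, and taking the countable union over $i$ (here countable additivity of $P^*$ is essential) leaves a single $P$-null set outside of which $I_{A_i}(w) = c_{i,j}$ for every $i$ and every $w \in F_j$. On this full-measure set the balance equals $\sum_i \alpha_i(c_{i,j} - P(A_i)) = s_j - p$, where $s_j = \sum_i \alpha_i c_{i,j}$ and $p = \sum_i \alpha_i P(A_i)$; both series converge, $p$ by the defining property of system 3 and $s_j$ because the balance converges pointwise and its partial sums differ from those of $\sum_i \alpha_i c_{i,j}$ only by the partial sums of the convergent $p$ (and $P(F_j) > 0$ guarantees such a $w$ exists). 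Thus the balance agrees a.s. $P^*$ with the simple function $g = \sum_{j=1}^n (s_j - p) I_{F_j}$.

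Finally I would compute $E_{P^*}[g] = \sum_{j=1}^n P(F_j)(s_j - p)$. The crucial simplification is that $j$ ranges over a finite set, so I may interchange the finite sum over $j$ with the limit defining each $s_j$: $\sum_j P(F_j) s_j = \lim_N \sum_{i=1}^N \alpha_i \sum_j c_{i,j} P(F_j) = \lim_N \sum_{i=1}^N \alpha_i P(A_i) = p$, while $\sum_j P(F_j)\, p = p$. Hence $E_{P^*}[g] = p - p = 0$, and since the balance equals $g$ a.s. its expectation is $0$. This rules out uniform sure loss, since a balance bounded above by $-\epsilon$ everywhere would have strictly negative expectation, so $P$ is system-3-coherent.

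The step I expect to be delicate is precisely this interchange. In system 3 the price need only converge conditionally, so neither $\sum_i |\alpha_i| P(A_i)$ nor $\sum_i |\alpha_i| c_{i,j}$ need converge, and the Monotone Convergence and Fubini machinery used for system 2 is unavailable. What rescues the argument — and explains why the finite-atom hypothesis is exactly what is required — is that after reducing to atoms the outer sum has only $n$ terms, so the interchange is the trivial one of a finite linear combination of convergent sequences rather than an appeal to any integrability theorem.
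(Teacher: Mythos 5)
Your proof is correct and follows essentially the same route as the paper's: both reduce to the finite atomic partition, identify the balance a.s.\ with a simple function determined by the atom-wise coefficient sums (your $s_j$, the paper's $\sum_{i \in I_j} \alpha_i$), derive convergence of those sums from pointwise convergence of the balance at a witness point in the positive-measure part of each atom, and finish with a finite interchange. Your closing remarks correctly pinpoint why the finite-atom hypothesis is what makes the interchange legitimate despite only conditional convergence of the price.
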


\begin{proof}
  Let $\{F_{1},\ldots,F_{n}\}$ be a partition of $\Omega$ into atoms. Observe that, if all expectations are defined, then
  \begin{align}
    \label{bet3cohEqn1}
    E_{P^{*}}\left[\sum_{i=1}^{\infty}{\alpha_{i}(I_{A_{i}}-P(A_{i}))}\right] &= E_{P^{*}}\left[\sum_{i=1}^{\infty}{\alpha_{i}\left(\sum_{j=1}^{n}{I_{A_{i} \cap F_{j}}-P(A_{i} \cap F_{j})}\right)}\right] = \nonumber \\
										&= \sum_{j=1}^{n}{E_{P^{*}}\left[\sum_{i=1}^{\infty}{{\alpha_{i}(I_{A_{i} \cap F_{j}}-P(A_{i} \cap F_{j}))}}\right]}
  \end{align}

  Let $I_{j} = \{i \in \mathbb{Z}^{+}: P(A_{i} \cap F_{j}) = P(F_{j})\}$. For every $j \in \{1,\ldots,n\}$, since $\{F_{1},\ldots,F_{n}\}$ is a partition of $\Omega$ into atoms,
  \begin{align}
    \label{bet3cohEqn2}
    E_{P^{*}}\left[\sum_{i=1}^{\infty}{\alpha_{i}(I_{A_{i} \cap F_{j}}-P(A_{i} \cap F_{j}))}\right] = E_{P^{*}}\left[\sum_{i \in I_{j}}{\alpha_{i}(I_{F_{j}}-P(F_{j}))}\right]
  \end{align}
	
  Next, we prove that $\sum_{i \in I_{j}}{\alpha_{i}}$ converges and, thus, all the equalities hold. Let $M_{j} = \bigcap{\{A_{i}: P(A_{i} \cap F_{j}) = P(F_{j})\}}$ and $N_{j} = \bigcup{\{A_{i}: P(A_{i} \cap F_{j}) = 0\}}$. Since $F_{j}$ is an atom and $P$ is countably additive, $P(M_{j}-N_{j}) = P(F_{j}) > 0$. Hence, $\exists w_{j} \in M_{j}-N_{j}$.	By construction, $\sum_{i=1}^{\infty}{\alpha_{i}I_{A_{i}}(w_{j})} = \sum_{i \in I_{j}}{\alpha_{i}}$. Since $(\alpha_{i},A_{i})_{i \in \mathbb{Z}^{+}}$ is in betting system 3, $\sum_{i=1}^{\infty}{\alpha_{i}I_{A_{i}}}$ converges pointwise on $\Omega$ and, thus, $\sum_{i \in I_{j}}{\alpha_{i}}$ converges. Applying equation \eqref{bet3cohEqn2} to \eqref{bet3cohEqn1},
  \begin{align*}
    E_{P^{*}}\left[\sum_{i=1}^{\infty}{\alpha_{i}(I_{A_{i}}-P(A_{i}))}\right] &= \sum_{j=1}^{n}{\left(\sum_{i \in I_{j}}{\alpha_{i}}\right) E_{P^{*}}\left[I_{F_{j}}-P(F_{j})\right]} = 0
  \end{align*}	
\end{proof}

Next, Example \ref{bet3Example} presents a system-3-coherent $P:\mathcal{C} \rightarrow [0,1]$ that admits no coherent extension to $\mathcal{F}(\mathcal{C})$. In this respect system $3$ differs from system $1$ and is similar to system $2$ (Example \ref{bet2Example}). Lemma \ref{P0Lemma} is useful for proving Example \ref{bet3Example}.

\begin{lemma}
  \label{P0Lemma}
  Let $\Omega = \{0,1\}^{\mathbb{Z}^{+}}$, $A_{i} = \{w \in \Omega: w_{i} = 1\}$, $\mathcal{C} = \{A_{i}: i \in \mathbb{Z}^{+}\}$ and $P^{0}: \sigma(\mathcal{C}) \rightarrow [0,1]$ be the only countably additive probability such that $I_{A_{i}}$ are i.i.d. and $P^{0}(A_{i})=2^{-1}$. For every $n \in \mathbb{Z}^{+}$, and betting portfolio $(\beta_{i},B_{i})_{i \in \mathbb{Z}^{+}}$, $P^{0}\left(\sum_{i=1}^{n}{\beta_{i}(I_{B_{i}}-2^{-1}) \geq 0}\right) \geq 2^{-1}$.
\end{lemma}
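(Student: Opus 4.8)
The plan is to exploit the symmetry of $P^{0}$ under the map that flips every coordinate. Fix $n$ and a betting portfolio $(\beta_{i},B_{i})_{i \in \mathbb{Z}^{+}}$, and abbreviate $S = \sum_{i=1}^{n}{\beta_{i}(I_{B_{i}}-2^{-1})}$, a $\sigma(\mathcal{C})$-measurable random variable. Define $T:\Omega \rightarrow \Omega$ by $(Tw)_{i} = 1-w_{i}$; this is an involution, i.e. $T \circ T$ is the identity. The whole proof reduces to showing that $S$ is symmetric about $0$ under $P^{0}$, from which the bound $P^{0}(S \geq 0) \geq 2^{-1}$ is immediate.

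First I would verify that $T$ preserves $P^{0}$. Under the pushforward $P^{0} \circ T^{-1}$ the coordinates remain independent and $P^{0}(T^{-1}A_{i}) = P^{0}(\{w: w_{i}=0\}) = 2^{-1}$, so $P^{0} \circ T^{-1}$ is a countably additive probability making the $I_{A_{i}}$ i.i.d.\ with parameter $2^{-1}$; by the uniqueness clause in the hypothesis, $P^{0} \circ T^{-1} = P^{0}$. Next, since $\mathcal{C} = \{A_{i}: i \in \mathbb{Z}^{+}\}$, each $B_{i}$ equals some $A_{f(i)}$, whence $I_{B_{i}}(Tw) = 1 - I_{B_{i}}(w)$ and therefore $I_{B_{i}}(Tw) - 2^{-1} = -(I_{B_{i}}(w) - 2^{-1})$. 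Summing the $n$ terms gives $S(Tw) = -S(w)$ for every $w$. Combining the two facts yields the symmetry:
\begin{align*}
  P^{0}(S \geq 0) = P^{0}\left(T^{-1}\{S \geq 0\}\right) = P^{0}\left(\{w: S(Tw) \geq 0\}\right) = P^{0}(S \leq 0).
\end{align*}

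Finally, since every real number is either $\geq 0$ or $\leq 0$, we have $\{S \geq 0\} \cup \{S \leq 0\} = \Omega$, and inclusion--exclusion gives $P^{0}(S \geq 0) + P^{0}(S \leq 0) = 1 + P^{0}(S = 0) \geq 1$. Substituting the displayed equality produces $2\,P^{0}(S \geq 0) \geq 1$, that is $P^{0}(S \geq 0) \geq 2^{-1}$, as required. The argument is routine once the symmetry is established; the only step needing care is the measure-preservation of $T$, which I would justify through the uniqueness of $P^{0}$ rather than by a direct computation on cylinder sets. I expect this finite-horizon estimate to be the building block for the construction in Example \ref{bet3Example}, where the sign-flip symmetry of the fair-coin measure is precisely what obstructs uniform sure loss on the finite initial segments of a portfolio.
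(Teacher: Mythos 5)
Your proof is correct and follows essentially the same route as the paper's: both rest on the coordinate-flip involution $w \mapsto (1-w_{i})_{i}$, the identity $I_{A_{i}}(f(w))-2^{-1} = -(I_{A_{i}}(w)-2^{-1})$, and the resulting symmetry of the partial balance under $P^{0}$. Your write-up is in fact a bit more explicit than the paper's, which leaves the measure-preservation of the flip and the final inclusion--exclusion step to the reader under the phrase ``the symmetry of $P^{0}$.''
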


\begin{proof}
  Let $f: \Omega \rightarrow \Omega$ such that, for every $i \in \mathbb{Z}^{+}$, $f(w)_{i} = 1-w_{i}$.
	
  \begin{align*}
    \forall i \in \mathbb{Z}^{+}, \forall w \in \Omega, I_{A_{i}}(w)-2^{-1} = -(I_{A_{i}}(f(w))-2^{-1})
  \end{align*}

  Hence, for every $w \in \Omega$ such that $\sum_{i=1}^{n}{\beta_{i}(I_{B_{i}}(w)-2^{-1})} < 0$,
	
  \[ \sum_{i=1}^{n}{\beta_{i}(I_{B_{i}}(f(w))-2^{-1})} = -\sum_{i=1}^{n}{\beta_{i}(I_{B_{i}}(w)-2^{-1})} > 0 \]

  $P^{0}\left(\sum_{i=1}^{n}{\beta_{i}(I_{B_{i}}-2^{-1}) \geq 0}\right) \geq 2^{-1}$ follows from the symmetry of $P^{0}$.
\end{proof}
	
\begin{example}
  \label{bet3Example}
  Let $\Omega$ and $\mathcal{C}$ be as in Lemma \ref{P0Lemma}. Let $P(A_{i}) = 2^{-1}-4^{-i}$. $P$ is system-3-coherent but admits no system-3-coherent extension to $\mathcal{F}(\mathcal{C})$.
\end{example}

\begin{proof}
  First, we show that $P$ admits no system-3-coherent extension to $\mathcal{F}(\mathcal{C})$. Let $P^{a}$ be an arbitrary extension of $P$ to $\mathcal{F}(\mathcal{C})$. Observe that the image of $\mathcal{F}(\mathcal{C})$ through $P^{a}$ is an infinite set. It follows from Theorem \ref{bet3equiv} that $P^{a}$ is system-3-incoherent.

  Next, we prove that $P$ is system-$3$-coherent. Define $P^{0}$ such as in Lemma \ref{P0Lemma}. Let $(\gamma_{i},C_{i})_{i \in \mathbb{Z}^{+}}$ be a betting portfolio included under $P$ in betting system $3$. Also let $B_{n} = \sum_{i=1}^{n}{\gamma_{i}(I_{C_{i}}-P(C_{i}))}$. Let $f: \mathbb{Z}^{+} \rightarrow \mathbb{Z}^{+}$, where $f(i)$ is the integer such that $C_{i} = A_{f(i)}$. Let $M_{n}$ be the image of $\{1,\ldots,n\}$ through $f$,

  \begin{align*}
    P^{0}(B_{n} \geq 0)	&= P^{0}\left(\sum_{i=1}^{n}{\gamma_{i}(I_{A_{f(i)}}-P(A_{f(i)})) \geq 0}\right) = \\
			&= P^{0}\left(\sum_{j \in M_{n}}{\left(\sum_{i=1}^{n}{\gamma_{i}I_{\{j\}}(f(i))}\right)(I_{A_{j}}-P(A_{j}))} \geq 0 \right)
  \end{align*}
  
  For every $j \in M_{n}$, define $\alpha_{j,n} = \sum_{i=1}^{n}{\gamma_{i}I_{\{j\}}(f(i))}$.

  \begin{align*}
    P^{0}(B_{n} \geq 0) = P^{0}\left(\sum_{j \in M_{n}}{\alpha_{j,n}(I_{A_{j}}-2^{-1})} \geq \sum_{j \in M_{n}}{\alpha_{j,n}4^{-j}} \right)
  \end{align*}

  If $\max_{j \leq M_{n}}{\alpha_{j,n}} \leq 0$, then $\sum_{j \in M_{n}}{\alpha_{j,n}4^{-j}} \leq 0$. Hence, by Lemma \ref{P0Lemma},

  \begin{align*}
    P^{0}(B_{n} \geq 0) \geq P^{0}\left(\sum_{j \in M_{n}}{\alpha_{j,n}(I_{A_{j}}-2^{-1})} \geq 0 \right) \geq 2^{-1}
  \end{align*}	

  Otherwise, let $j^{*} = \arg\max_{j \leq M_{n}}{\alpha_{j,n}}$. Since $\sum_{j \in M_{n}}{\alpha_{j,n}4^{-j}} \leq \alpha_{j^{*},n}\sum_{j \in M_{n}}{4^{-j}}$ and also $ \alpha_{j^{*},n}\sum_{j \in M_{n}}{4^{-j}} \leq \alpha_{j^{*},n}3^{-1}$, conclude that

  \begin{align*}
    P^{0}(B_{n} \geq 0)	&\geq P^{0}\left(\sum_{j \in M_{n}}{\alpha_{j,n}(I_{A_{j}}-2^{-1})} \geq \alpha_{j^{*},n}3^{-1} \right) \geq	\\
			&\geq P^{0}\left(\sum_{j \in M_{n}}{\alpha_{j,n}(I_{A_{j}}-2^{-1})} \geq \alpha_{j^{*},n}3^{-1} \cap A_{j^{*}}\right) \geq\\
			&\geq P(A_{j^{*}}) P^{0}\left(\sum_{j \in M_{n}-\{j^{*}\}}{\alpha_{j,n}(I_{A_{j}}-2^{-1})} \geq -\alpha_{j^{*},n}(2^{-1}-3^{-1}) \right) \geq 4^{-1}
  \end{align*}

  The last inequality follows from Lemma \ref{P0Lemma} and $\alpha_{j^{*},n}4^{-1} > 0$. Thus, $\forall n \in \mathbb{Z}^{+}$, $P^{0}(B_{n} \geq 0) \geq 4^{-1}$. Hence, $P^{0}(w: B_{n} \geq 0 \text{ infinitely often}) \geq 4^{-1}$. Since $(\gamma_{i},C_{i})_{i \in \mathbb{Z}^{+}}$ is in betting system $3$ and $B_{\infty}$ is the balance of this portfolio, $B_{\infty}$ is defined for every $w \in \Omega$. Conclude that there exists $w \in \Omega$ such that $B_{\infty}(w) \geq 0$ and $P$ is system-3-coherent.
\end{proof}

\section{Betting system 2A}
\label{bet2A}

Consider that $\mathcal{C}$ is a field. Betting system $2A$ extends system $2B$. Hence, it follows from \cite{Beam} that, if $P$ is system-2A-coherent, then $P$ is a countably additive probability. Next, we characterize system-2A-coherence.

\begin{theorem}
  \label{bet2Aequiv}
  Let $\mathcal{C}$ be a field. $P$ is system-2A-coherent iff $P$ is a countably additive probability.
\end{theorem}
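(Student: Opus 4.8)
The plan is to prove both implications. The necessity direction is immediate from the preceding remarks: since system $2A$ extends system $2B$, any betting portfolio witnessing uniform sure loss in $2B$ also lies in $2A$, so system-$2A$-coherence implies system-$2B$-coherence, and Beam's result (\cite{Beam}) then forces $P$ to be a countably additive probability. The content is therefore the reverse implication: assuming $P$ is a countably additive probability, I must show that no portfolio in system $2A$ leads to uniform sure loss.

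First I would fix a portfolio $(\alpha_{i},A_{i})_{i \in \mathbb{Z}^{+}}$ in system $2A$ and pass to the Carath\'{e}odory extension $P^{*}$ on $\sigma(\mathcal{C})$. Write $T(w) = \sum_{i=1}^{\infty}{|\alpha_{i}(I_{A_{i}}(w)-P(A_{i}))|}$; membership in $2A$ says exactly that $T(w) < \infty$ for every $w \in \Omega$. The key step, and the place where absolute (rather than merely conditional) convergence does the work, is to show that $T$ is $P^{*}$-integrable. By the Monotone Convergence Theorem and $E_{P^{*}}[I_{A_{i}}] = P(A_{i})$, one has $E_{P^{*}}[T] = \sum_{i=1}^{\infty}{2|\alpha_{i}|P(A_{i})(1-P(A_{i}))}$. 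I would then compare this series term by term with $T(w)$ at an arbitrary fixed $w$: for each $i$, if $w \in A_{i}$ then $2|\alpha_{i}|P(A_{i})(1-P(A_{i})) \leq 2|\alpha_{i}|(1-P(A_{i})) = 2|\alpha_{i}(I_{A_{i}}(w)-P(A_{i}))|$, while if $w \notin A_{i}$ then $2|\alpha_{i}|P(A_{i})(1-P(A_{i})) \leq 2|\alpha_{i}|P(A_{i}) = 2|\alpha_{i}(I_{A_{i}}(w)-P(A_{i}))|$. Summing over $i$ gives $E_{P^{*}}[T] \leq 2T(w) < \infty$, so $T \in L^{1}(P^{*})$.

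Once integrability is in hand, the conclusion is routine. Let $S_{n} = \sum_{i=1}^{n}{\alpha_{i}(I_{A_{i}}-P(A_{i}))}$; each $S_{n}$ is $\sigma(\mathcal{C})$-measurable with $E_{P^{*}}[S_{n}] = 0$, and $S_{n} \to S_{\infty} := \sum_{i=1}^{\infty}{\alpha_{i}(I_{A_{i}}-P(A_{i}))}$ pointwise with $|S_{n}| \leq T$. The Dominated Convergence Theorem yields $E_{P^{*}}[S_{\infty}] = \lim_{n}{E_{P^{*}}[S_{n}]} = 0$. Hence the set $\{w : S_{\infty}(w) \geq 0\}$ cannot be empty, since an everywhere-negative balance would force $E_{P^{*}}[S_{\infty}] < 0$. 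Thus there is some $w$ with $S_{\infty}(w) \geq 0$ and the portfolio does not lead to uniform sure loss, so $P$ is system-$2A$-coherent.

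I expect the main obstacle to be precisely the integrability of $T$: a priori the expected balance need not even be defined, since system $2A$ (unlike system $2$) does not assume $\sum_{i=1}^{\infty}{|\alpha_{i}|P(A_{i})} < \infty$, and pointwise-everywhere finiteness of a nonnegative series does not in general imply finiteness of its integral. The term-by-term bound $E_{P^{*}}[T] \leq 2T(w)$ is what resolves this, and it is worth emphasizing that it uses the full strength of the $2A$ condition: the analogous inequality fails in system $3$, where only conditional pointwise convergence is assumed, which is exactly why Theorem \ref{bet3equiv} needs the additional hypothesis of a finite partition into atoms.
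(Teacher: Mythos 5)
Your proposal is correct and follows essentially the same route as the paper: necessity via the inclusion of system $2B$ in system $2A$ and Beam's characterization, and sufficiency via the key bound $E_{P^{*}}\left[\sum_{i=1}^{\infty}{|\alpha_{i}(I_{A_{i}}-P(A_{i}))|}\right] = \sum_{i=1}^{\infty}{2|\alpha_{i}|P(A_{i})(1-P(A_{i}))} \leq 2\sum_{i=1}^{\infty}{|\alpha_{i}(I_{A_{i}}(w)-P(A_{i}))|} < \infty$ at a fixed $w$ (the paper phrases your case analysis as $P(A_{i})(1-P(A_{i})) \leq \min\{P(A_{i}),1-P(A_{i})\}$), after which the expected balance is $0$ by interchanging sum and expectation (the paper uses Fubini where you use dominated convergence, an immaterial difference). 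You also correctly identify the integrability of the absolute balance as the crux of the argument.
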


\begin{proof}
  Since system $2B$ is included in system $2A$, conclude from \cite{Beam} that the system-$2A$-coherence of $P$ implies that $P$ is a countably additive probability. The reverse implication is proved in Lemma \ref{bet2Acoh}.
\end{proof}

\begin{lemma}
  \label{bet2Acoh}
  Let $\mathcal{C}$ be a field and $P$ be a countably additive probability. For every betting portfolio $(\alpha_{i},A_{i})_{i \in \mathbb{Z}^{+}}$ in betting system $2A$, $E_{P^{*}}[\sum_{i=1}^{\infty}{\alpha_{i}(I_{A_{I}}-P(A_{i}))}] = 0$. Hence, $P$ is system-$2A$-coherent.
\end{lemma}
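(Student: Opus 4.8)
The plan is to show that the balance $S = \sum_{i=1}^{\infty}{\alpha_{i}(I_{A_{i}}-P(A_{i}))}$ of an arbitrary system-$2A$ portfolio is $P^{*}$-integrable with $E_{P^{*}}[S] = 0$; system-$2A$-coherence then follows at once, since a uniform sure loss $S \leq -\epsilon$ would force $E_{P^{*}}[S] \leq -\epsilon < 0$. The difficulty, by contrast with Lemma \ref{bet2coh}, is that system $2A$ does not presume $\sum_{i=1}^{\infty}{|\alpha_{i}|P(A_{i})} < \infty$ (the portfolio $\alpha_{i} \equiv 1$, $A_{i} \equiv \Omega$ lies in $2A$ but not in $2$), so the Monotone Convergence argument of Lemma \ref{bet2coh} is not directly available. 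Instead I would extract the needed integrability from the sole hypothesis of system $2A$, namely that $T(w) := \sum_{i=1}^{\infty}{|\alpha_{i}(I_{A_{i}}(w)-P(A_{i}))|}$ is finite for every $w \in \Omega$.

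The key observation is the elementary inequality $P(A_{i})(1-P(A_{i})) \leq |I_{A_{i}}(w)-P(A_{i})|$, valid for every $i$ and every $w$: if $w \in A_{i}$ the right-hand side is $1-P(A_{i}) \geq P(A_{i})(1-P(A_{i}))$, and if $w \notin A_{i}$ it is $P(A_{i}) \geq P(A_{i})(1-P(A_{i}))$, using $0 \leq P(A_{i}) \leq 1$. Fixing any $w_{0} \in \Omega$, multiplying by $|\alpha_{i}|$ and summing gives $\sum_{i=1}^{\infty}{|\alpha_{i}|P(A_{i})(1-P(A_{i}))} \leq T(w_{0}) < \infty$. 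Since $E_{P^{*}}[|I_{A_{i}}-P(A_{i})|] = 2P(A_{i})(1-P(A_{i}))$, Tonelli's theorem then yields $E_{P^{*}}[T] = 2\sum_{i=1}^{\infty}{|\alpha_{i}|P(A_{i})(1-P(A_{i}))} \leq 2T(w_{0}) < \infty$, so $T \in L^{1}(P^{*})$.

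With the integrable dominating function $T$ in hand the remainder is routine. The partial sums $S_{n} = \sum_{i=1}^{n}{\alpha_{i}(I_{A_{i}}-P(A_{i}))}$ are $\sigma(\mathcal{C})$-measurable, satisfy $|S_{n}| \leq T$, converge pointwise to $S$ by the system-$2A$ assumption, and have $E_{P^{*}}[S_{n}] = 0$ for every $n$; the Dominated Convergence Theorem (\cite{Billingsley}) therefore gives that $S$ is measurable and integrable with $E_{P^{*}}[S] = \lim_{n}{E_{P^{*}}[S_{n}]} = 0$. Hence no portfolio in system $2A$ produces a uniform sure loss, and $P$ is system-$2A$-coherent.

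I expect the bound $E_{P^{*}}[T] \leq 2T(w_{0})$ to be the crux of the argument: it is exactly the point where the purely pointwise finiteness demanded by system $2A$ is converted into the global summability required to interchange sum and expectation. As an alternative to the Dominated Convergence step, once $\sum_{i=1}^{\infty}{|\alpha_{i}|P(A_{i})(1-P(A_{i}))} < \infty$ is established one may split the indices at $P(A_{i}) = \tfrac{1}{2}$, replace each $A_{i}$ with $P(A_{i}) > \tfrac{1}{2}$ by its complement in the field $\mathcal{C}$ (which leaves each balance term unchanged), and thereby exhibit two system-$2$ portfolios whose balances sum to $S$, reducing the claim directly to Lemma \ref{bet2coh}.
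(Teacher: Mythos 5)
Your proof is correct and follows essentially the same route as the paper's: both extract integrability from the pointwise finiteness of $\sum_{i=1}^{\infty}{|\alpha_{i}||I_{A_{i}}-P(A_{i})|}$ via the bound $|\alpha_{i}|P(A_{i})(1-P(A_{i})) \leq |\alpha_{i}||I_{A_{i}}(w)-P(A_{i})|$ (the paper routes this through $\min\{P(A_{i}),1-P(A_{i})\}$), and then interchange sum and expectation. Your use of Dominated Convergence in place of the paper's Fubini step is an immaterial difference.
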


\begin{proof}	
  Since $(\alpha_{i},A_{i})_{i \in \mathbb{Z}^{+}}$ is in system $2A$, $\sum_{i=1}^{\infty}{|\alpha_{i}||I_{A_{i}}-P(A_{i})|}$ converges pointwise. Using the Monotone Convergence Theorem, conclude that

  \begin{align*}
    E_{P^{*}}\left[\sum_{i=1}^{\infty}{|\alpha_{i}||I_{A_{i}}-P(A_{i})|}\right]	&=	\sum_{i=1}^{\infty}{2|\alpha_{i}|P(A_{i})(1-P(A_{i}))}	 \leq \\
											&\leq	\sum_{i=1}^{\infty}{2|\alpha_{i}|\min\{P(A_{i}),1-P(A_{i})\}}									\leq	\\
											&\leq	2\sum_{i=1}^{\infty}{|\alpha_{i}||I_{A_{i}}-P(A_{i})}| < \infty
  \end{align*}

  Hence, using Fubini's Theorem,

  \begin{align*}
    E_{P^{*}}\left[\sum_{i=1}^{\infty}{\alpha_{i}(I_{A_{i}}-P(A_{i}))}\right] = \sum_{i=1}^{\infty}{E_{P^{*}}\left[\alpha_{i}(I_{A_{i}}-P(A_{i}))\right]} = 0
  \end{align*}
\end{proof}

Next, Example \ref{bet2ABExample} resolves a question posed by \cite{Beam}: Can every system-$2B$-coherent price assignment $P$ be extended coherently to a field?

\begin{example}
  \label{bet2ABExample}
  Let $(\Omega, \mathcal{C}, P)$ and $\delta$ be such as in Example \ref{bet2Example}. $P$ is system-$2A$ and system-$2B$-coherent but admits no system-$2A$ or system-$2B$-coherent extension to $\mathcal{F}(\mathcal{C})$.
\end{example}

\begin{proof}
  From Example \ref{bet2Example}, $P$ admits no countably additive extension to $\mathcal{F}(\mathcal{C})$. Hence by the characterization of coherence in \cite{Beam}, $P$ admits no system-2B or system-2A-coherent extension to $\mathcal{F}(\mathcal{C})$. 

  Next, consider an arbitrary betting portfolio $(\beta_{i},B_{i})_{i \in \mathbb{Z}^{+}}$ in betting system $2A$. For all $w \in \Omega$, $\sum_{i=1}^{\infty}{|\beta_{i}||I_{B_{i}}-P(B_{i})|} < \infty$. Take $w \notin A_{1}$,

  \begin{align*}
    \sum_{i=1}^{\infty}{|\beta_{i}|P(B_{i})} = \sum_{i=1}^{\infty}{|\beta_{i}||I_{B_{i}}(w)-P(B_{i})|} < \infty
  \end{align*}

  Thus, every betting portfolio in betting system $2A$ is also in betting system $2$. From Example \ref{bet2Example}, $P$ is system-$2A$-coherent. Since betting system $2A$ extends $2B$, $P$ is system-$2B$-coherent.
\end{proof}

Beam \cite{Beam} also questions: If $P$ is system-$2B$-coherent, can there exist a betting portfolio $(\beta_{i},B_{i})_{i \in \mathbb{Z}^{+}}$ which leads to uniform sure loss and such that the truncated balances $\sum_{i=n}^{\infty}{\alpha_{i}(I_{B_{i}}-P(B_{i}))}$ are pointwise convergent on $\Omega$ and uniformly bounded over $n \in \mathbb{Z}^{+}$ on $\Omega$? Example \ref{beamExample} provides an answer.

\begin{example}
  \label{beamExample}
  Let $(\Omega, \mathcal{C}, P)$, $(A_{i})_{i \geq 1}$ and $\delta$ be such as in Example \ref{bet2Example}. $P$ is system-2B-coherent and there exists a betting portfolio $(\beta_{i},B_{i})_{i \in \mathbb{Z}^{+}}$ which leads to uniform sure loss and such that the truncated balances $\sum_{i=n}^{\infty}{\beta_{i}(I_{B_{i}}-P(B_{i}))}$ are pointwise convergent on $\Omega$ and uniformly bounded over $n \in \mathbb{Z}^{+}$ on $\Omega$.
\end{example}

\begin{proof}
  Example \ref{bet2ABExample} shows thats $P$ is system-$2B$-coherent. Let $(\sigma_{i},S_{i})_{i \in \mathbb{Z}^{+}} = \left(\frac{(-1)^{i}}{i+1},A_{i}\right)_{i \in \mathbb{Z}^{+}}$. Next, let $\pi$ be the permutation of $\mathbb{Z}^{+}$ such that, $\pi(3i-2) = 4i-3$, $\pi(3i-1) = 4i-1$ and $\pi(3i) = 2i$. Also let $(\tau_{i},T_{i})_{i \in \mathbb{Z}^{+}} = \left(\frac{(-1)^{\pi(i)+1}}{\pi(i)+1},A_{\pi(i)}\right)_{i \in \mathbb{Z}^{+}}$. Finally, let $(\beta_{i},B_{i})_{i \in \mathbb{Z}^{+}}$ be such that $(\beta_{2i-1},B_{2i-1}) = (\sigma_{i},S_{i})$ and $(\beta_{2i},B_{2i}) = (\tau_{i},T_{i})$. 

  Since $\sum_{i=1}^{\infty}{\sigma_{i}(I_{S_{i}}-P(S_{i}))}$ and $\sum_{i=1}^{\infty}{\tau_{i}(I_{T_{i}}-P(T_{i}))}$ are pointwise convergent on $\Omega$, so is $\sum_{i=1}^{\infty}{\beta_{i}(I_B{_{i}}-P(B_{i}))}$ and

  \begin{align*}
    \sum_{i=1}^{\infty}{\beta_{i}(I_{B_{i}}-P(B_{i}))} = \sum_{i=1}^{\infty}{\sigma_{i}(I_{S_{i}}-P(S_{i}))} + \sum_{i=1}^{\infty}{\tau_{i}(I_{T_{i}}-P(T_{i}))}
  \end{align*}

  Hence, the truncated balances $\sum_{i=n}^{\infty}{\beta_{i}(I_{B_{i}}-P(B_{i}))}$ converge pointwise on $\Omega$. 

  Also, observe that, by construction, for every $w \in \Omega$, $\sum_{i=1}^{\infty}{\sigma_{i}I_{S_{i}}} = -\sum_{i=1}^{\infty}{\tau_{i}I_{T_{i}}}$. Thus, for every $w \in \Omega$,

  \begin{align*}
    \sum_{i=1}^{\infty}{\beta_{i}(I_{B_{i}}-P(B_{i}))}	&= -\sum_{i=1}^{\infty}{\sigma_{i}P(S_{i})} - \sum_{i=1}^{\infty}{\tau_{i}P(T_{i})} = \\
							&= -\sum_{i=1}^{\infty}{\frac{(-1)^{i}}{i+1}\left(\frac{1}{i+1}+\delta \right)} - \sum_{i=1}^{\infty}{\frac{(-1)^{\pi(i)+1}}{\pi(i)+1}\left(\frac{1}{\pi(i)+1}+\delta \right)} = \\
							&= -\delta \left(\sum_{i=1}^{\infty}{\frac{(-1)^{i}}{i+1}} - \sum_{i=1}^{\infty}{\frac{(-1)^{\pi(i)}}{\pi(i)+1}}\right) < 0
  \end{align*}

  The last equality does not depend on $w$ and, thus, $(\beta_{i},B_{i})_{i \in \mathbb{Z}^{+}}$ leads to uniform sure loss. It remains to show that the truncated balances $\sum_{i=n}^{\infty}{\beta_{i}(I_{B_{i}}-P(B_{i}))}$ are uniformly bounded over $n \in \mathbb{Z}^{+}$ on $\Omega$. Since $\sum_{i=1}^{\infty}{\beta_{i}I_{B_{i}}} = 0$,

  \begin{align*}
    \left|\sum_{i=n}^{\infty}{\beta_{i}(I_{B_{i}}-P(B_{i}))}\right| &\leq \left|\sum_{i=1}^{n-1}{\beta_{i}I_{B_{i}}}\right| + \left|\sum_{i=n}^{\infty}{\beta_{i}P(B_{i})}\right|
  \end{align*}

  Since $\sum_{i=n}^{\infty}{\beta_{i}P(B_{i})}$ is constant on $\Omega$ and $\sum_{i=1}^{\infty}{\beta_{i}P(B_{i})}$ converges, it remains to show that $\sum_{i=1}^{n-1}{\beta_{i}I_{B_{i}}}$ is uniformly bounded over $n \in \mathbb{Z}^{+}$ on $\Omega$. Let $n^{*} = \left\lfloor\frac{n}{2}\right\rfloor$.

  \begin{align*}
    \left|\sum_{i=1}^{n-1}{\beta_{i}I_{B_{i}}}\right| &\leq \left|\sum_{i=1}^{n^{*}}{\left(\sigma_{i}I_{S_{i}} + \tau_{i}I_{T_{i}}\right)}\right| + 1
  \end{align*}

  Observe that the image of $\{1,\ldots,n^{*}\}$ through $\pi$ is composed of the first $n_{e} = \left\lfloor \frac{n^{*}}{3} \right\rfloor$ even numbers and the first $n_{o} = n^{*}-n_{e}$ odd numbers. Hence, due to the symmetry of $\tau_{i}$ in relation to $\sigma_{i}$, 

  \begin{align*}
    \left|\sum_{i=1}^{n-1}{\beta_{i}I_{B_{i}}}\right|	&\leq \left|\sum_{i= \left\lceil\frac{n^{*}}{2}\right\rceil+1}^{n_{o}}{\frac{1}{2i}}\right| + \left|\sum_{i=n_{e}+1}^{\left\lceil\frac{n^{*}}{2}\right\rceil}{\frac{1}{2i+1}}\right| + 1 \leq \\
							&\leq \left|\sum_{i= \left\lfloor\frac{n}{4}\right\rfloor}^{\left\lceil \frac{n}{3}\right\rceil}{\frac{1}{2i}}\right| + \left|\sum_{i=\left\lfloor\frac{n}{6}\right\rfloor}^{\left\lceil\frac{n}{4}\right\rceil}{\frac{1}{2i+1}}\right| + 1
  \end{align*}

  Since both sums on the right side are constant on $\Omega$ and converge as $n \rightarrow \infty$, conclude that $\sum_{i=1}^{n-1}{\beta_{i}I_{B_{i}}}$ is uniformly bounded over $n \in \mathbb{Z}^{+}$ on $\Omega$. 
\end{proof}

\begin{corollary}
  Let $\mathcal{C}$ be a field and $P$ be system-$2A$-coherent. For every random variable $X$ which is $\sigma(\mathcal{C})$-measurable, there exists a betting portfolio in betting system 2A with balance $B$ which is a version of $X$ iff $E_{P^{*}}[X] = 0$.
\end{corollary}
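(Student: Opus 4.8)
The plan is to prove the two implications separately, following the template of Theorem~\ref{bet2Balances}, and to exploit that system-$2A$-coherence of $P$ forces $P$ to be a countably additive probability (Theorem~\ref{bet2Aequiv}), hence also system-$2$-coherent (Theorem~\ref{bet2equiv}), so that the machinery of Theorem~\ref{bet2Balances} is available. For the forward (``only if'') direction, suppose a betting portfolio $(\alpha_{i},A_{i})_{i \in \mathbb{Z}^{+}}$ in system $2A$ has balance $B$ that is a version of $X$. Then Lemma~\ref{bet2Acoh} gives $E_{P^{*}}[B] = 0$; since the computation in that lemma also shows $\sum_{i=1}^{\infty}{|\alpha_{i}||I_{A_{i}}-P(A_{i})|}$ has finite expectation, $B$ is integrable, and because $X = B$ a.s.\ $P^{*}$ we conclude $E_{P^{*}}[X] = 0$.

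For the reverse direction, assume $E_{P^{*}}[X] = 0$. The key observation is that the portfolio constructed in the proof of Theorem~\ref{bet2Balances} already lies in system $2A$, not merely in system $2$. I would therefore run the identical construction: write $X = X^{+} + X^{-}$, expand $X^{+}$ as an a.s.\ pointwise-convergent series $\sum_{i=1}^{\infty}{\beta_{f(i)}I_{B_{f(i)}}}$ of non-negative multiples of indicators with $\sum_{i=1}^{\infty}{\beta_{f(i)}P(B_{f(i)})} = E_{P^{*}}[X^{+}]$, do the analogous thing for $X^{-}$, and interleave the two families into a single portfolio $(\alpha_{i},A_{i})_{i \in \mathbb{Z}^{+}}$ whose balance equals $X$ a.s.\ $P^{*}$. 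All of this is verbatim from Theorem~\ref{bet2Balances}, so the only remaining task is to verify the system-$2A$ membership condition $\sum_{i=1}^{\infty}{|\alpha_{i}||I_{A_{i}}-P(A_{i})|} < \infty$ for every $w \in \Omega$.

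The verification is where the non-negative structure of the construction is essential, and I expect it to be the only substantive point. Splitting the sum into the positive-part terms $(\beta_{i},B_{i})$ and the negative-part terms $(\gamma_{i},C_{i})$, and using $|I_{A}-P(A)| \leq I_{A}+P(A)$, one bounds
\begin{align*}
  \sum_{i=1}^{\infty}{|\beta_{i}||I_{B_{i}}-P(B_{i})|} \leq \sum_{i=1}^{\infty}{\beta_{i}I_{B_{i}}} + \sum_{i=1}^{\infty}{\beta_{i}P(B_{i})},
\end{align*}
together with the analogous inequality for the $C_{i}$. Every summand on the right is non-negative, so both series converge pointwise on all of $\Omega$: the first converges because $\sum_{i=1}^{\infty}{\beta_{i}I_{B_{i}}}$ equals the (finite) value $X^{+}$ off an exceptional set and equals $0$ on it, while the second converges to the finite constant $E_{P^{*}}[X^{+}]$. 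Adding the positive and negative parts yields $\sum_{i=1}^{\infty}{|\alpha_{i}||I_{A_{i}}-P(A_{i})|} < \infty$ for every $w \in \Omega$, so the portfolio is in system $2A$. Thus the difficulty is entirely in upgrading the pointwise convergence guaranteed by Theorem~\ref{bet2Balances} to pointwise \emph{absolute} convergence, and this succeeds precisely because the positive and negative parts are each represented by series of non-negative terms.
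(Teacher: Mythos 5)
Your proof is correct and takes essentially the same route as the paper: necessity via Lemma \ref{bet2Acoh}, and sufficiency by reusing the construction of Theorem \ref{bet2Balances} and checking that it lands in system $2A$. Your explicit verification of the pointwise absolute convergence (using that the positive and negative parts are each series of non-negative terms) is precisely the detail the paper leaves implicit in the phrase ``the same construction.''
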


\begin{proof}
  Lemma \ref{bet2Acoh} shows that $E_{P^{*}}[X] = 0$ is a necessary condition. The same construction as in Theorem \ref{bet2Balances} shows that it is also a sufficient condition.
\end{proof}

\section{The Space of Balances}
\label{spaceOfBalances}

Consider a betting system $\Pi$ which includes more betting portfolios than betting system $2$. Consider that $P$ is strongly $\Pi$-coherent. A balance $B$ is generated by $\Pi$ and $P$ if there exists a betting portfolio in $\Pi$ with balance $B$. What types of balances can be generated by $\Pi$ and $P$ but not by betting system $2$ and $P$? We restrict ourselves to the case in which $\Pi$ is a complete betting system, as follows:
 
\begin{definition}
  \label{completeSystem}
  $\Pi$ is a complete betting system if

  \begin{enumerate}[label=(\roman{*}), ref=(\roman{*})]
    \item It contains betting system $2$.
    \item It is contained by betting system $3$.
    \item If $(\alpha_{i},A_{i})_{i \in \mathbb{Z}^{+}}$ is in $\Pi$, then $(-\alpha_{i},A_{i})_{i \in \mathbb{Z}^{+}}$ is in $\Pi$.
    \item If $(\gamma_{i},C_{i})_{i \in \mathbb{Z}^{+}}$ and $(\beta_{i},B_{i})_{i \in \mathbb{Z}^{+}}$ are in $\Pi$, then $(\alpha_{i},A_{i})_{i \in \mathbb{Z}^{+}}$ with $(\alpha_{2n-1},A_{2n-1}) = (\beta_{n}, B_{n})$ and $(\alpha_{2n},A_{2n}) = (\gamma_{n}, C_{n})$ is in $\Pi$.
    \item Let $(\alpha_{i},A_{i})_{i \in \mathbb{Z}^{+}}$ be in $\Pi$. If $\pi$ is a permutation of $\mathbb{Z}^{+}$ and the balance and price of $(\alpha_{\pi(i)},A_{\pi(i)})_{i \in \mathbb{Z}^{+}}$ are defined, then $(\alpha_{\pi(i)},A_{\pi(i)})_{i \in \mathbb{Z}^{+}}$ is in $\Pi$. 
  \end{enumerate}
\end{definition}

It follows from definition that betting systems $2$ and $3$ are complete. We analyse complete systems according to the following alternative to coherence.

\begin{definition}
  \label{strongCoherence}
  $Z \in \mathcal{P}(\Omega)$ is a null set if, $\forall \delta > 0$, there exists $Z_{\delta} \in \mathcal{C}$ such that $Z \subset Z_{\delta}$ and $P(Z_{\delta}) < \delta$. $P$ is weakly incoherent in a given betting system if there exists a betting portfolio in that system, $(\alpha_{i},A_{i})_{i \in \mathbb{Z}^{+}}$ which leads to weak uniform sure loss. That is, there exists a null set $Z \in \mathcal{P}(\Omega)$, and $\epsilon > 0$, such that, $\forall w \notin Z$, $\sum_{i=1}^{\infty}{\alpha_{i}(I_{A_{i}}(w)-P(A_{i}))} \leq -\epsilon$. A probability assignment is strongly coherent if it is not weakly incoherent.
\end{definition}

We restrict ourselves to the case in which $\mathcal{C}$ is a field. In this case, Proposition \ref{strongCoherenceEquiv} shows that weak incoherence corresponds to $P^{*}$ almost sure loss.

\begin{proposition}
  \label{strongCoherenceEquiv}
  If $\mathcal{C}$ is a field, then $P$ is weakly incoherent in $\Pi$ iff there exists a betting portfolio $(\alpha_{i},A_{i})_{i \in \mathbb{Z}^{+}}$ in the system which leads to weak sure loss, i.e., for some $\epsilon > 0$, $\sum_{i=1}^{\infty}{\alpha_{i}(I_{A_{i}}(w)-P(A_{i}))} \leq -\epsilon$ a.s. $P^{*}$.
\end{proposition}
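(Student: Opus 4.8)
The plan is to prove the two implications separately. The conceptual point is that the notion of ``null set'' in Definition \ref{strongCoherence} is a priori stronger than a $P^{*}$-null set: a set that is null in the sense of Definition \ref{strongCoherence} must be coverable, for every $\delta > 0$, by a \emph{single} element of the field $\mathcal{C}$ of price below $\delta$, whereas a $P^{*}$-null set of $\sigma(\mathcal{C})$ is only covered by \emph{countably many} such elements. Consequently the forward implication is the routine one, and the reverse implication carries the weight of the proof.

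Forward implication. Suppose $P$ is weakly incoherent, witnessed by $(\alpha_{i},A_{i})_{i \in \mathbb{Z}^{+}}$, a null set $Z$ and $\epsilon > 0$ with $\sum_{i=1}^{\infty}{\alpha_{i}(I_{A_{i}}(w)-P(A_{i}))} \leq -\epsilon$ for all $w \notin Z$. Because $\Pi$ is contained in betting system $3$, the balance $B = \sum_{i=1}^{\infty}{\alpha_{i}(I_{A_{i}}-P(A_{i}))}$ is a pointwise limit of the field-simple partial sums $\sum_{i \leq n}{\alpha_{i}(I_{A_{i}}-P(A_{i}))}$, hence is $\sigma(\mathcal{C})$-measurable; thus $\{B > -\epsilon\} \in \sigma(\mathcal{C})$ and $\{B > -\epsilon\} \subseteq Z$. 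For each $\delta > 0$ choose $Z_{\delta} \in \mathcal{C}$ with $Z \subseteq Z_{\delta}$ and $P(Z_{\delta}) < \delta$; since $P^{*}$ extends $P$, $P^{*}(\{B > -\epsilon\}) \leq P^{*}(Z_{\delta}) = P(Z_{\delta}) < \delta$. Letting $\delta \rightarrow 0$ gives $P^{*}(\{B > -\epsilon\}) = 0$, that is, $B \leq -\epsilon$ a.s. $P^{*}$, which is weak sure loss.

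Reverse implication. Suppose $(\alpha_{i},A_{i})_{i \in \mathbb{Z}^{+}}$ yields $B \leq -\epsilon$ a.s. $P^{*}$. First I would replace $\epsilon$ by a slightly smaller level $\epsilon^{\dagger} \in (0,\epsilon)$ chosen so that $P^{*}(\{B = -\epsilon^{\dagger}\}) = 0$; this is possible because the distribution of $B$ has at most countably many atoms, and it still gives $B \leq -\epsilon^{\dagger}$ a.s. $P^{*}$, with $N := \{B > -\epsilon^{\dagger}\}$ satisfying $P^{*}(N) = 0$. The goal is then to cover $N$ by a single element of $\mathcal{C}$ of arbitrarily small price. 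To manufacture such a single field set I would invoke Egorov's theorem: since the partial sums converge to $B$ pointwise on all of $\Omega$, for each $\eta > 0$ there is $E_{\eta} \in \sigma(\mathcal{C})$ with $P^{*}(E_{\eta}) < \eta$ on whose complement the convergence is uniform, so that some finite partial sum $S_{n_{0}}$ satisfies $|S_{n_{0}} - B| < \gamma$ off $E_{\eta}$. The set $\{S_{n_{0}} > -\epsilon^{\dagger} - \gamma\}$ is then a genuine element of $\mathcal{C}$ (being the bad set of a \emph{finite} portfolio), it contains $N \setminus E_{\eta}$, and the atom-free choice of $\epsilon^{\dagger}$ forces its $P^{*}$-measure off $E_{\eta}$ below $P^{*}(\{B > -\epsilon^{\dagger} - 2\gamma\})$, which decreases to $P^{*}(\{B \geq -\epsilon^{\dagger}\}) = 0$ as $\gamma \rightarrow 0$.

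The main obstacle is exactly the leftover $E_{\eta}$: the finite-portfolio construction covers $N$ only up to the Egorov set $E_{\eta} \in \sigma(\mathcal{C})$, which is not itself an element of the field $\mathcal{C}$, and a $P^{*}$-null set need not be coverable by a single small field set (the rationals inside $(0,1)$ show this for the Borel field). I would therefore iterate the construction along $\eta_{k} \downarrow 0$, producing finite portfolios whose field bad sets have price below $2^{-k}$, and assemble them into one portfolio of $\Pi$ using the closure of a complete betting system under concatenation and reordering (Definition \ref{completeSystem}, parts (iii)--(v)). The delicate bookkeeping is to weight and interleave these finite portfolios so that their combined balance stays below a fixed negative level off a set that is null in the sense of Definition \ref{strongCoherence}, while controlling the positive excursions of each truncation on its small field bad set. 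This assembly, rather than any single measure-theoretic step, is where I expect the real difficulty to lie.
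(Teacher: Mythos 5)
Your ``forward implication'' (weak incoherence implies $P^{*}$-almost-sure loss) is exactly the paper's argument for that direction and is correct: the balance is $\sigma(\mathcal{C})$-measurable as a pointwise limit of $\mathcal{C}$-simple partial sums, its bad set $B_{\epsilon}=\{\sum_{i}\alpha_{i}(I_{A_{i}}-P(A_{i}))>-\epsilon\}$ lies inside the null set $Z$, and monotonicity of $P^{*}$ over the covers $Z_{\delta}\in\mathcal{C}$ forces $P^{*}(B_{\epsilon})=0$.

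The reverse implication is where your proposal stops short of a proof. The paper handles it in one line: it takes $Z=B_{\epsilon}$ itself and asserts that, since $B_{\epsilon}\in\sigma(\mathcal{C})$ and $P^{*}(B_{\epsilon})=0$, the approximation theorem for a measure on a field makes $B_{\epsilon}$ a null set in the sense of Definition~\ref{strongCoherence}, whence $P$ is weakly incoherent. You instead try to manufacture the covering field sets directly via Egorov's theorem, cannot absorb the Egorov exceptional set $E_{\eta}\in\sigma(\mathcal{C})$ into a single element of $\mathcal{C}$, and end by conceding that the ``assembly'' of countably many finite portfolios is an unresolved difficulty. As written this is a genuine gap: you never exhibit a null set and a portfolio witnessing weak uniform sure loss, and the countable assembly you gesture at is not licensed by Definition~\ref{completeSystem}, whose item (iv) only closes $\Pi$ under interleaving \emph{two} portfolios. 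I will add that your suspicion about the key step is not unfounded: the outer-measure characterization gives a countable cover of a $P^{*}$-null set by field sets of small total measure, or a single field set within small symmetric difference, but not a single \emph{covering} field set of small measure --- $\mathbb{Q}\cap(0,1)$ under the field of finite unions of intervals is $P^{*}$-null yet every field set containing it has measure at least $1$. So the paper's citation does not, on its face, deliver the covering property for an arbitrary $P^{*}$-null element of $\sigma(\mathcal{C})$; it would have to exploit the special structure of $B_{\epsilon}$ as the exceptional set of an everywhere-convergent series of $\mathcal{C}$-simple functions. You correctly isolate this issue, but isolating it is not resolving it, so the reverse implication remains unproven in your proposal.
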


\begin{proof}
  Suppose there exists $\epsilon > 0$ and a betting portfolio $(\alpha_{i},A_{i})_{i \in \mathbb{Z}^{+}}$ in $\Pi$ such that $\sum_{i=1}^{\infty}{\alpha_{i}(I_{A_{i}}(w)-P(A_{i}))} \leq -\epsilon$ a.s. $P^{*}$. Let $B_{\epsilon} = \{\sum_{i=1}^{\infty}{\alpha_{i}(I_{A_{i}}(w)-P(A_{i}))} > -\epsilon\}$. Since $P^{*}(B_{\epsilon}) = 0$ and $\mathcal{C}$ is a field, it follows from \cite{Billingsley}[p.~186] that $B_{\epsilon}$ is a null set and $P$ is weakly incoherent. If $P$ is weakly incoherent, then there exists $\epsilon > 0$, a betting portfolio in $\Pi$ $(\alpha_{i},A_{i})_{i \in \mathbb{Z}^{+}}$ and a null set $Z$ such that $B_{\epsilon} = \{\sum_{i=1}^{\infty}{\alpha_{i}(I_{A_{i}}(w)-P(A_{i}))} > -\epsilon\} \subset Z$. Since $B_{\epsilon} \in \sigma(\mathcal{C})$ and $B_{\epsilon} \subset Z$, $P^{*}(B_{\epsilon}) = 0$. Hence, $\sum_{i=1}^{\infty}{\alpha_{i}(I_{A_{i}}(w)-P(A_{i}))} \leq -\epsilon$ a.s. $P^{*}$.
\end{proof}

The following Lemma is useful when proving weak incoherence. In this respect, it is similar to Lemma \ref{constantBet}.

\begin{lemma}
  \label{constantBet2}
  If $\mathcal{C}$ is a field, and there exist $c > 0$ and a betting portfolio $(\alpha_{i},A_{i})_{i\in \mathbb{Z}^{+}}$ in $\Pi$ with balance $B$ such that $B \leq -c$ a.s. $P^{*}$ or $B \geq c$ a.s. $P^{*}$, then $P$ is weakly $\Pi$-incoherent.
\end{lemma}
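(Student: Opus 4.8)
The plan is to reduce the claim directly to Proposition~\ref{strongCoherenceEquiv}, which already identifies weak incoherence in $\Pi$ with the existence of a portfolio in $\Pi$ whose balance is at most $-\epsilon$ a.s.\ $P^{*}$ for some $\epsilon > 0$. Since the hypothesis supplies a portfolio whose balance is bounded away from $0$ in the $P^{*}$-a.s.\ sense, the work amounts to matching the sign of that bound to the sign required by the Proposition. I would split into two cases according to whether $B \leq -c$ or $B \geq c$ a.s.\ $P^{*}$.

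In the first case, suppose $B \leq -c$ a.s.\ $P^{*}$. Then the portfolio $(\alpha_{i},A_{i})_{i \in \mathbb{Z}^{+}}$ already witnesses weak sure loss in the sense of Proposition~\ref{strongCoherenceEquiv}, taking $\epsilon = c$. Hence $P$ is weakly $\Pi$-incoherent and nothing further is needed. In the second case, suppose $B \geq c$ a.s.\ $P^{*}$. Here I would pass to the reflected portfolio $(-\alpha_{i},A_{i})_{i \in \mathbb{Z}^{+}}$, whose balance is $-B \leq -c$ a.s.\ $P^{*}$. Because $\Pi$ is a complete betting system, clause~(iii) of Definition~\ref{completeSystem} guarantees that $(-\alpha_{i},A_{i})_{i \in \mathbb{Z}^{+}}$ again lies in $\Pi$, so Proposition~\ref{strongCoherenceEquiv} applies to it with $\epsilon = c$ and yields weak incoherence.

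I expect no genuine obstacle here: there are no convergence or integrability questions, since the $P^{*}$-a.s.\ bound on $B$ transfers verbatim (up to sign) to the bound demanded by Proposition~\ref{strongCoherenceEquiv}. The only point requiring care is bookkeeping in the second case, namely confirming that the reflected portfolio is genuinely admissible in $\Pi$; this is exactly the closure-under-negation clause of completeness, which is why the lemma is stated for a complete $\Pi$. In this sense the result is the strong-coherence analogue of Lemma~\ref{constantBet}, with Proposition~\ref{strongCoherenceEquiv} playing the role that Definition~\ref{coherence} played there.
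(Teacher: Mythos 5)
Your proposal matches the paper's proof essentially verbatim: both split into the two sign cases, apply Proposition~\ref{strongCoherenceEquiv} directly when $B \leq -c$ a.s.\ $P^{*}$, and in the other case invoke clause~(iii) of Definition~\ref{completeSystem} to pass to $(-\alpha_{i},A_{i})_{i \in \mathbb{Z}^{+}}$ with balance $-B \leq -c$ a.s.\ $P^{*}$. The argument is correct and no further comment is needed.
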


\begin{proof}
  If $B \leq -c$ a.s. $P^{*}$, the weak $\Pi$-incoherence of $P$ follows from Proposition \ref{strongCoherenceEquiv}. If $B \geq c$ a.s. $P^{*}$, by property $(iii)$ of the completeness of $\Pi$, $(-\alpha_{i},A_{i})_{i \in \mathbb{Z}^{+}}$ is in $\Pi$. The balance of this portfolio is $-B$ and $-B \leq -c$ a.s. $P^{*}$. Conclude that $P$ is weakly $\Pi$-incoherent.
\end{proof}

Theorem \ref{completesBalances} provides a characterization of the balances which are generated by $\Pi$ and a strongly $\Pi$-coherent $P$.

\begin{theorem}
  \label{completesBalances}
  If $\mathcal{C}$ is a field and $P$ is strongly $\Pi$-coherent, then every balance $B$ generated by $\Pi$ and $P$ is either a version of a balance generated by betting system $2$ and $P$ or such that $E_{P^{*}}[B]$ is not defined.
\end{theorem}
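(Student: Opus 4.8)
The plan is to prove the contrapositive in the following form: if $B$ is a balance generated by $\Pi$ and $P$, and $E_{P^*}[B]$ is defined (possibly $\pm\infty$), then $B$ is a version of a balance generated by betting system $2$. By Theorem \ref{bet2Balances}, a $\sigma(\mathcal{C})$-measurable random variable is a version of a system-$2$ balance if and only if its $P^*$-expectation is $0$. So the heart of the matter is to show that strong $\Pi$-coherence forces $E_{P^*}[B] = 0$ whenever $E_{P^*}[B]$ is defined. The two dangerous cases to rule out are $E_{P^*}[B] \neq 0$ finite and $E_{P^*}[B] = \pm\infty$; in each case I aim to manufacture a betting portfolio in $\Pi$ whose balance is $\leq -c$ a.s.\ $P^*$ for some $c > 0$, contradicting strong coherence via Lemma \ref{constantBet2}.

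First I would fix a betting portfolio $(\alpha_i, A_i)_{i \in \mathbb{Z}^+}$ in $\Pi$ with balance $B$ and suppose $E_{P^*}[B]$ is defined. Since $\Pi \subseteq$ system $3$, the balance $B$ converges pointwise and the price $\sum_i \alpha_i P(A_i)$ converges. The key structural observation is that the partial-sum process $B_n = \sum_{i=1}^n \alpha_i(I_{A_i} - P(A_i))$ converges pointwise to $B$, and the price-completeness axioms $(iii)$--$(v)$ of Definition \ref{completeSystem} let me permute, negate, and interleave portfolios while staying inside $\Pi$. The idea in the nonzero-finite case is this: if $E_{P^*}[B] = m \neq 0$, I would attempt to use the price-convergence requirement of system $3$ together with permutations to relocate the ``excess mass'' $m$ into a constant drift, producing a portfolio whose balance is a.s.\ bounded away from $0$ by a fixed sign. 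Concretely, interleaving $(\alpha_i, A_i)$ with a system-$2$ portfolio whose balance is a version of $-B + (\text{something of mean }0)$ should collapse the random part while leaving a nonzero constant, which Lemma \ref{constantBet2} then kills.

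For the infinite-expectation case, say $E_{P^*}[B^+] = \infty$ while $E_{P^*}[B^-] < \infty$ (so $E_{P^*}[B] = +\infty$), the strategy is to build from $(\alpha_i, A_i)$ a new portfolio whose balance is $\geq c$ a.s.\ $P^*$: one exploits the fact that an unbounded-expectation balance, when combined through the interleaving axiom $(iv)$ with a suitable rescaling, yields arbitrarily large positive expected return that, by a symmetrization or truncation argument, can be forced to be a.s.\ positive and bounded below. Here I would lean on the construction technique already used in the proof of Theorem \ref{bet2Balances} — writing the positive part $B^+$ as a pointwise-increasing limit of nonnegative simple functions and realizing it as a system-$2$-style sum of indicators — to transfer the divergence of the expectation into an a.s.\ sign-definite balance.

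The hard part, and the main obstacle, will be the nonzero finite case: showing that $m \neq 0$ genuinely contradicts strong coherence rather than merely coherence. Ordinary coherence only forbids \emph{uniform} sure loss (a pointwise $\epsilon$ bound for all $w$), whereas strong coherence forbids loss off a null set, which is exactly what Proposition \ref{strongCoherenceEquiv} converts into ``$\leq -\epsilon$ a.s.\ $P^*$.'' The delicate step is verifying that the portfolio I assemble by interleaving and permutation actually lies in $\Pi$ — its price and permuted balance must both be defined so that axiom $(v)$ applies — and that the resulting balance really is a.s.\ constant-signed rather than merely having nonzero mean. I expect the cleanest route is to reduce to producing a constant-balance portfolio (in the sense of Definition \ref{defConstantBet}) with balance $m \neq 0$, at which point Lemma \ref{constantBet2}, or its pointwise analogue Lemma \ref{constantBet}, finishes the argument.
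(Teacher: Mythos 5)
Your proposal follows essentially the same route as the paper: characterize system-2 balances as the mean-zero $\sigma(\mathcal{C})$-measurable variables via Theorem \ref{bet2Balances}, kill a finite nonzero mean by interleaving (axiom $(iv)$) with a system-2 portfolio whose balance is $E_{P^*}[B]-B$, so that the combined balance is the constant $E_{P^*}[B]\neq 0$, and finish with Lemma \ref{constantBet2}. The only place you are vaguer than the paper is the infinite-expectation case, where the concrete device is the truncation you allude to --- for $E_{P^*}[B]=-\infty$, interleave with a system-2 portfolio realizing $-B\cdot I_{B>c}+k$ for suitable $c<0$, $k<0$, which yields a balance $B\cdot I_{B\le c}+k\le k<0$ a.s., and reduce $+\infty$ to $-\infty$ by negation via axiom $(iii)$ --- and note that the permutation axiom $(v)$ you worry about is never actually needed.
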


\begin{proof}
  From property $(ii)$ of the completeness of $\Pi$, if $X$ is a balance generated by $\Pi$, then it is a $\sigma(\mathcal{C})$-measurable random variable. From Theorem \ref{bet2Balances}, any $\sigma(\mathcal{C})$-measurable $X$ such that $E_{P^{*}}[X] = 0$ is a version of a balance of a betting portfolio in betting system $2$. Hence, from property (i), it remains to prove that, for every $\sigma(\mathcal{C})$-measurable $X$, such that $E_{P^{*}}[X] \neq 0$, $X$ is not a balance of a betting portfolio in $\Pi$. Assume $(\beta_{i},B_{i})_{i \in \mathbb{Z}^{+}}$ is a betting portfolio in $\Pi$ with balance $X$.
  
  Let $E_{P^{*}}[X] \neq 0$ be finite. Using Theorem \ref{bet2Balances}, there exists a betting portfolio in betting system $2$, $(\gamma_{i},C_{i})_{i \in \mathbb{Z}^{+}}$, such that $\sum_{i=1}^{\infty}{\gamma_{i}(I_{C_{i}}-P(C_{i}))} = E_{P^{*}}[X]-X$ a.s. $P^{*}$. By property $(i)$ of completeness, $(\gamma_{i},C_{i})_{i \in \mathbb{Z}^{+}}$ is in $\Pi$. By property $(iv)$ of completeness, $(\alpha_{i},A_{i})_{i \in \mathbb{Z}^{+}}$ with $(\alpha_{2n-1},A_{2n-1}) = (\beta_{n}, B_{n})$ and $(\alpha_{2n},A_{2n}) = (\gamma_{n}, C_{n})$ is in $\Pi$. $\sum_{i=1}^{\infty}{\alpha_{i}(I_{A_{i}}-P(A_{i}))} = E_{P^{*}}[X]$ a.s. $P^{*}$. A contradiction follows from Lemma \ref{constantBet2}.
  
  Let $E_{P^{*}}[X] = -\infty$. Hence, there exists $c < 0$ and $k < 0$ such that $E_{P^{*}}[-X\cdot I_{X > c}+k] = 0$. Using Theorem \ref{bet2Balances} and property $(i)$ of completeness, there exists a betting portfolio $(\gamma_{i},C_{i})_{i \in \mathbb{Z}^{+}}$ in $\Pi$ with balance equal to $-X\cdot I_{X > c}+k$ a.s. $P^{*}$. By property $(iv)$ of completeness, $(\alpha_{i},A_{i})_{i \in \mathbb{Z}^{+}}$ with $(\alpha_{2n-1},A_{2n-1}) = (\beta_{n}, B_{n})$ and $(\alpha_{2n},A_{2n}) = (\gamma_{n}, C_{n})$ is in $\Pi$. By construction, $\sum_{i=1}^{\infty}{\alpha_{i}(I_{A_{i}}-P(A_{i}))} = X\cdot I_{X \leq c}+k$ a.s. $P^{*}$. A contradiction follows from Lemma \ref{constantBet2}.
  
  Let $E_{P^{*}}[X] = \infty$. By property $(iii)$ of completeness, $(-\beta_{i},B_{i})_{i \in \mathbb{Z}^{+}}$ is in $\Pi$ and also $E_{P^{*}}[\sum_{i=1}^{\infty}{-\beta_{i}(I_{B_{i}}-P(B_{i}))}] = -\infty$, a contradiction.
\end{proof}

Whether there exists $B$ generated by $\Pi$ and $P$ such that $E_{P^{*}}[B]$ is not defined is unresolved. This is the object of the following conjecture:

\begin{conjecture}
  \label{balancesConj}
  If $\mathcal{C}$ is a field and $P$ is strongly $\Pi$-coherent, then every balance generated by $\Pi$ and $P$ is a version of a balance generated by betting system $2$ and $P$.
\end{conjecture}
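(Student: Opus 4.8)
The plan is to build on Theorem \ref{completesBalances}, which already shows that every balance $B$ generated by $\Pi$ and a strongly $\Pi$-coherent $P$ is either a version of a balance from betting system $2$ or has undefined expectation $E_{P^{*}}[B]$. Thus the conjecture is equivalent to the claim that no betting portfolio in $\Pi$ produces a balance $B$ with $E_{P^{*}}[B^{+}] = E_{P^{*}}[B^{-}] = \infty$, and I would establish this by proving the contrapositive: the existence of such a balance makes $P$ weakly $\Pi$-incoherent, contradicting the hypothesis.

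First I would narrow down which portfolios can yield an undefined expectation. If $(\alpha_{i},A_{i})_{i\in\mathbb{Z}^{+}}$ lies in $\Pi$ and its price is absolutely convergent, $\sum_{i}{|\alpha_{i}|P(A_{i})} < \infty$, then the portfolio already belongs to system $2$, so Lemma \ref{bet2coh} forces $E_{P^{*}}[B] = 0$. Hence an undefined expectation can arise only when the price $\sum_{i}{\alpha_{i}P(A_{i})}$ converges conditionally. This is precisely the regime in which the Riemann rearrangement phenomenon was used to manufacture sure loss in Lemma \ref{bet3finiteSupport} and Example \ref{beamExample}, which suggests attacking the problem through the permutation-closure of $\Pi$ rather than through additive corrections.

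Note that the additive truncation argument used for the $\pm\infty$ cases in Theorem \ref{completesBalances} cannot be adapted directly: to bound a combined balance $B + Y$ below by $-c$ almost surely one would need a system-$2$ balance $Y$ with $Y \leq -c - B$ a.s., but on the set carrying the infinite mass of $E_{P^{*}}[B^{+}]$ this forces $E_{P^{*}}[Y^{-}] = \infty$, so no zero-mean correction $Y$ exists (and the reverse inequality fails symmetrically using $E_{P^{*}}[B^{-}] = \infty$). I would therefore imitate Lemma \ref{bet3finiteSupport}: using the conditional convergence of the price, choose a permutation $\pi$ with rearranged price $p^{*} \neq p$, and interleave the original portfolio with its negated, $\pi$-rearranged copy (permitted by properties $(iii)$, $(iv)$ and $(v)$ of Definition \ref{completeSystem}). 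If the rearranged winnings cancel the original winnings pointwise, the combined balance is the constant $p - p^{*} \neq 0$, and Lemma \ref{constantBet2} delivers weak $\Pi$-incoherence.

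The hard part, and the reason the statement remains a conjecture, is that property $(v)$ of Definition \ref{completeSystem} admits the permuted portfolio only when its balance is defined, i.e.\ when $\sum_{i}{\alpha_{\pi(i)}(I_{A_{\pi(i)}} - P(A_{\pi(i)}))}$ converges at every $w \in \Omega$. In Lemma \ref{bet3finiteSupport} the events $A_{i}$ are pairwise disjoint, so at each $w$ at most one indicator is nonzero at a time and the rearranged winnings cancel the originals automatically; for general overlapping events in a field there is no such cancellation, and a rearrangement that shifts the conditionally convergent price can destroy the pointwise convergence of the balance, ejecting the portfolio from $\Pi$. Reconciling a price-altering (or otherwise completeness-admissible) rearrangement with preservation of the balance's pointwise convergence, so as to produce a constant nonzero, or at least an almost surely one-sidedly bounded, balance, is the central obstacle I would expect to confront, and handling it for arbitrary fields is exactly what is needed to settle the conjecture.
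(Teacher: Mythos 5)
This statement is a \emph{conjecture}: the paper explicitly states, just before it, that whether there exists a balance $B$ generated by $\Pi$ and $P$ with $E_{P^{*}}[B]$ undefined is unresolved, and it offers no proof. So there is no paper argument to compare against, and your proposal does not close the gap either --- by your own admission it stops at the decisive step. What you do get right is the correct reduction: by Theorem \ref{completesBalances} the conjecture is equivalent to excluding balances with $E_{P^{*}}[B^{+}]=E_{P^{*}}[B^{-}]=\infty$; such a balance can only come from a portfolio whose price $\sum_{i}\alpha_{i}P(A_{i})$ converges conditionally (absolute convergence of the price, together with $\Pi\subset$ system $3$, puts the portfolio in system $2$ and forces $E_{P^{*}}[B]=0$ via Lemma \ref{bet2coh}); and the truncation device from the $\pm\infty$ cases of Theorem \ref{completesBalances} genuinely cannot be recycled, since any zero-mean system-$2$ correction $Y$ with $B+Y$ bounded on one side a.s.\ would itself need an infinite one-sided expectation. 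These observations are sound and correctly locate the difficulty.

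The genuine gap is the one you name: to run the rearrangement argument of Lemma \ref{bet3finiteSupport} you must exhibit a permutation $\pi$ that changes the conditionally convergent price \emph{and} keeps $\sum_{i}\alpha_{\pi(i)}(I_{A_{\pi(i)}}-P(A_{\pi(i)}))$ pointwise convergent on all of $\Omega$, otherwise property $(v)$ of Definition \ref{completeSystem} does not admit the permuted portfolio into $\Pi$. In Lemma \ref{bet3finiteSupport} disjointness of the $A_{i}$ makes the indicator part of the sum trivially order-independent, so only the price is rearranged; for overlapping events in a general field, the winnings $\sum_{i}\alpha_{i}I_{A_{i}}(w)$ may themselves be only conditionally convergent at some $w$, and a price-shifting permutation can destroy convergence there. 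Nothing in your sketch shows that an admissible permutation with $p^{*}\neq p$ always exists, nor rules out that $\Pi$ (which is any complete system between $2$ and $3$, not necessarily permutation-rich) simply contains no usable rearrangement. That is precisely the open problem, so the proposal should be read as a plan of attack and a correct diagnosis, not as a proof.
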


\section{Alternatives to Coherence}
\label{altCoherence}

The previous Sections use the definition of coherence and strong coherence. \cite{Adams} also considers the following definition:

\begin{definition}
  \label{rationality}
  $P$ is irrational in a given betting system if there exists a betting portfolio included in that system, $(\alpha_{i},A_{i})_{i \in \mathbb{Z}^{+}}$, which leads to sure loss. That is, $\forall w \in \Omega$, $\sum_{i=1}^{\infty}{\alpha_{i}(I_{A_{i}}(w)-P(A_{i}))} < 0$. $P$ is rational if it is not irrational.
\end{definition}
 
Every incoherent $P$ is also weakly incoherent and irrational. In the betting system described in \cite{deFinetti2} balances can assume only a finite number of values. In this case, the three definitions are equivalent. Next, we show that, when $\mathcal{C}$ is a field, the three conditions are equivalent in betting systems $1$, $2$, $2B$, $2A$ and $3$.

\begin{theorem}
  \label{coherenceEquiv}
  Let $\mathcal{C}$ be a field. In betting systems $1$, $2$, $2B$, $2A$ and $3$ the following statements are equivalent:
  \begin{itemize}
    \item $P$ is coherent.
    \item $P$ is strongly coherent.
    \item $P$ is rational.
  \end{itemize}
\end{theorem}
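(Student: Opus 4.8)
The plan is to exploit the two implications that are essentially free, together with the zero-expectation lemmas already established for each system. First I would record the trivial directions: a portfolio exhibiting uniform sure loss (balance $\leq -\epsilon$ everywhere) automatically exhibits sure loss (balance $< 0$ everywhere) and weak uniform sure loss (take the null set $Z = \emptyset$). Thus incoherence implies both irrationality and weak incoherence, which is precisely the contrapositive of ``strongly coherent implies coherent'' and ``rational implies coherent.'' Consequently it suffices to prove the two reverse implications, that coherence implies rationality and that coherence implies strong coherence, in each of the systems $2$, $2A$, $2B$ and $3$; system $1$ I treat separately below.

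The engine for both reverse implications is that a coherent $P$ forces every balance to have zero expectation. In systems $2$, $2A$ and $3$, coherence yields, via Theorems \ref{bet2equiv}, \ref{bet2Aequiv} and \ref{bet3equiv}, that $P$ is a countably additive probability (with the additional atomic structure in system $3$), so that $P^{*}$ exists, and then Lemmas \ref{bet2coh}, \ref{bet2Acoh} and \ref{bet3coh} give $E_{P^{*}}[B] = 0$ for the balance $B$ of every portfolio in the system. In system $2B$, coherence forces $P$ to be countably additive by \cite{Beam}, and since every $2B$-portfolio is also a $2A$-portfolio, Lemma \ref{bet2Acoh} again yields $E_{P^{*}}[B] = 0$. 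Given this, if $P$ were irrational some portfolio would have a balance $B$ with $B(w) < 0$ for every $w$, hence $B < 0$ a.s. $P^{*}$ and $E_{P^{*}}[B] < 0$, a contradiction; and if $P$ were weakly incoherent, Proposition \ref{strongCoherenceEquiv} would give a portfolio with $B \leq -\epsilon$ a.s. $P^{*}$ for some $\epsilon > 0$, whence $E_{P^{*}}[B] \leq -\epsilon < 0$, again a contradiction. Thus a coherent $P$ is both rational and strongly coherent.

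System $1$ requires a separate argument, because a system-$1$-coherent $P$ need only be finitely additive and hence $P^{*}$ need not exist. Here I would use \cite{Heath} to extend $P$ to a finitely additive probability $P^{e}$ on $\mathcal{F}(\mathcal{C})$, and use that each balance is a finite sum, hence $\mathcal{F}(\mathcal{C})$-measurable and finite-valued, with $E_{P^{e}}[B] = 0$. Irrationality then contradicts this as before, since a finite-valued $B < 0$ everywhere has $\max B < 0$; and for weak incoherence, the finite additivity of $P^{e}$ together with $P^{e}(\{B > -\epsilon\}) = 0$ (which holds because the finite-valued set $\{B > -\epsilon\} \in \mathcal{F}(\mathcal{C})$ is contained in a null set) forces $E_{P^{e}}[B] \leq -\epsilon$, the desired contradiction.

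The step I expect to require the most care is the interface between the two regimes. Proposition \ref{strongCoherenceEquiv} is phrased in terms of $P^{*}$ and is literally stated only for complete systems, so I must check that its proof — which uses nothing beyond $\mathcal{C}$ being a field and the balance being $\sigma(\mathcal{C})$-measurable — carries over to systems $2A$ and $2B$, and that in each system the balance genuinely is $\sigma(\mathcal{C})$-measurable (it is, being a pointwise limit on all of $\Omega$ of measurable functions by the definition of each system). The analogous null-set bookkeeping for the finitely additive extension in system $1$ must be verified by hand, since there Carath\'{e}odory's theorem is unavailable.
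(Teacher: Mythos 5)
Your proposal is correct and follows essentially the same route as the paper: the trivial implications from uniform sure loss in one direction, and in the other the zero-expectation lemmas (Lemmas \ref{bet2coh}, \ref{bet2Acoh}, \ref{bet3coh}, with system $2B$ absorbed into $2A$, and the finitely additive analogue for system $1$ via \cite{Heath}) to rule out sure loss and weak uniform sure loss. You are somewhat more explicit than the paper about the measurability and null-set bookkeeping, but the substance of the argument is identical.
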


\begin{proof}
  If a betting portfolio leads to uniform sure loss, than it leads to sure loss and weak uniform sure loss. Hence, independently of the betting system under consideration, if $P$ is rational or if $P$ is strongly coherent, then $P$ is coherent.

  Consider that $P$ is system-1-coherent. Hence, \cite{deFinetti2} implies that $P$ is a finitely additive probability. \cite{Kadane2}[p.~25] shows that, for every betting portfolio $(\alpha_{i},A_{i})_{i \in \mathbb{Z}^{+}}$ included in system $1$, $E[\sum_{i=1}^{\infty}{\alpha_{i}(I_{A_{i}}-P(A_{i}))}] = 0$. Thus, $P$ is strongly coherent and rational. 

  Similarly, if $P$ is system-2-coherent, system-2B-coherent or system-2A-coherent, Theorems \ref{bet2equiv} and \ref{bet2Aequiv} imply that $P$ is a countably additive probability. Hence,  Lemmas \ref{bet2coh} and \ref{bet2Acoh} imply that, for every betting portfolio $(\alpha_{i},A_{i})_{i \in \mathbb{Z}^{+}}$ included in system $2$, $2B$ or $2A$, $E_{P^{*}}[\sum_{i=1}^{\infty}{\alpha_{i}(I_{A_{i}}-P(A_{i}))}] = 0$. Thus, $P$ is strongly coherent and rational.

  Finally, if $P$ is system-3-coherent, Theorem \ref{bet3equiv} implies that $P$ is a countably additive probability and $\Omega$ admits a finite partition in $\mathcal{C}$-measurable atoms. Hence, Lemma \ref{bet3coh} implies that, for every betting portfolio $(\alpha_{i},A_{i})_{i \in \mathbb{Z}^{+}}$ included in system $3$, $E_{P^{*}}[\sum_{i=1}^{\infty}{\alpha_{i}(I_{A_{i}}-P(A_{i}))}] = 0$. Thus, $P$ is strongly coherent and rational.
\end{proof}

\cite{Adams} presents as an open question to determine a condition equivalent to the system-2A-rationality of $P$ when $\mathcal{C}$ is a field.

\begin{proposition}
  Let $\mathcal{C}$ be a field. $P$ is system-2A-rational iff $P$ is a countably additive probability.
\end{proposition}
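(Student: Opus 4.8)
The plan is to derive this proposition directly from the two equivalences already established, rather than redo any analytic work. Theorem~\ref{coherenceEquiv} shows that, when $\mathcal{C}$ is a field, coherence and rationality coincide in betting system $2A$, and Theorem~\ref{bet2Aequiv} identifies system-$2A$-coherence with $P$ being a countably additive probability. Chaining these two equivalences immediately yields that system-$2A$-rationality is equivalent to countable additivity, which is exactly the claim and settles the open question of \cite{Adams}.

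To record the two directions explicitly, I would first argue that system-$2A$-rationality implies countable additivity. Any portfolio leading to uniform sure loss in the sense of Definition~\ref{coherence} a fortiori leads to sure loss in the sense of Definition~\ref{rationality}, so rationality implies coherence; Theorem~\ref{bet2Aequiv} then forces $P$ to be a countably additive probability. Conversely, assuming $P$ is a countably additive probability, Lemma~\ref{bet2Acoh} gives that the $P^{*}$-expectation of the balance of every portfolio in system $2A$ equals $0$. Consequently there must exist some $w\in\Omega$ at which the balance is nonnegative, so the balance cannot be strictly negative at every $w$; hence no portfolio in system $2A$ yields sure loss and $P$ is system-$2A$-rational.

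I expect no genuine obstacle here: all the analytic content—the monotone-convergence and Fubini argument establishing that the expected balance vanishes—is already packaged in Lemma~\ref{bet2Acoh}. The only point requiring care is the distinction between sure loss (strict negativity of the balance at every $w\in\Omega$) and uniform sure loss (negativity bounded away from $0$ by some $\epsilon$); the zero-expectation conclusion is strong enough to exclude even the weaker, non-uniform notion, which is precisely why the same condition on $P$ governs both rationality and coherence in this system.
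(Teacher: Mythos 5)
Your proof is correct and follows essentially the same route as the paper, which likewise derives the proposition by combining Theorem~\ref{bet2Aequiv} with Theorem~\ref{coherenceEquiv}. Your explicit unpacking of the two directions (rationality $\Rightarrow$ coherence via the a fortiori comparison of sure loss notions, and the converse via the zero-expectation conclusion of Lemma~\ref{bet2Acoh}) just spells out what the paper's one-line citation compresses.
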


\begin{proof}
  Follows from Theorem \ref{bet2Aequiv} and \ref{coherenceEquiv}.
\end{proof}

\section{Conclusions}
\label{conclusions}

We present characterizations of coherence for the betting systems under consideration when $\mathcal{C}$ is a field. Table $1$ summarizes the results which follow from \cite{deFinetti2}, \cite{Beam} and Theorems \ref{bet2equiv}, \ref{bet3equiv} and \ref{bet2Aequiv}. Theorems \ref{bet2Aequiv} and \ref{coherenceEquiv} solve a question presented in \cite{Adams}.

If $\mathcal{C}$ is a field, we also conclude from Theorem \ref{coherenceEquiv} that, for all of the betting systems under consideration, coherence is equivalent to strong coherence and rationality.

\begin{table}[ht]
  \centering
  \begin{tabular}{|c|c|c|}
    \hline
    System	& Allows portfolios with\ldots						& $P$ is coherent on a field iff		\\
    \hline
    1		& finite number of bets  						& $P$ is a finitely additive probability	\\ [0.25em]
    2		& invariance under permutations 					& $P$ is a countably additive probability	\\ [0.25em]
    2B		& $\sum_{i=1}^{\infty}{|\alpha_{i}(I_{A_{i}}-P(A_{i}))|} < M$ 	& $P$ is a countably additive probability	\\ [0.25em]
    2A		& $\sum_{i=1}^{\infty}{|\alpha_{i}(I_{A_{i}}-P(A_{i}))|} < \infty$	& $P$ is a countably additive probability	\\ [0.25em]
    3		& price and balance defined 						& $\Omega$ is partitioned in finite atoms and	\\
		&									& $P$ is a countably additive probability 	\\
    \hline
  \end{tabular}
  \label{tab:summary}
  \caption{Characterizations of Coherence.}
\end{table}

We also consider whether every coherent price assignment on an arbitrary $\mathcal{C}$ can be extended to a coherent assignment on $\mathcal{F}(\mathcal{C})$. While \cite{Heath} proves this result in betting system $1$, Examples \ref{bet2Example}, \ref{bet2ABExample} and \ref{bet3Example} present counter-examples in systems $2$, $2B$, $2A$ and $3$.

Next, for each betting system, we consider a fixed coherent $P$. We characterize which $\sigma(\mathcal{C})$-measurable functions had versions in the space of balances generated by $P$ and the betting system under consideration when $\mathcal{C}$ is a field. Table $2$ summarizes this result and is obtained from Theorems \ref{bet2Balances} and \ref{bet3equiv}. Tables $1$ and $2$ show that, although coherence is equivalent in betting systems 2 and $2B$, the space of balances generated by system $2$ is larger.

\begin{table}[h]
  \centering
  \begin{tabular}{|c|c|}
    \hline
    System	& $\exists$ a version of $X$ in the Space of Balances iff		\\
    \hline
    1		& $X$ is a simple $\mathcal{C}$-measurable function and $E_{P}[X]=0$	\\ [0.25em]
    2 		& $E_{P^{*}}[X]= 0$							\\ [0.25em]
    2B		& $X$ is a bounded function and $E_{P^{*}}[X]= 0$			\\ [0.25em]
    2A		& $E_{P^{*}}[X]= 0$							\\ [0.25em]
    3 		& $X$ is a simple function and $E_{P^{*}}[X]= 0$			\\
    \hline
  \end{tabular}
  \label{tab:summary2}
  \caption{Characterization of Space of Balances on a field.}
\end{table}

Finally, we characterize what kinds of balances can be generated by an arbitrary betting portfolio $\Pi$ when $P$ is strongly $\Pi$-coherent. We restrict ourselves to $\Pi$ being a complete betting system. In this case, if $\mathcal{C}$ is a field, every balance $B$ generated by $\Pi$ and $P$ is either a version of a balance generated by betting system $2$ and $P$ or such that $E_{P^{*}}[B]$ is not defined. It remains an open question if every balance $B$ generated by $\Pi$ and $P$ is a version of a balance generated by betting system $2$ and $P$.

\bibliographystyle{amsplain}
\bibliography{onBets}
\end{document}